\renewcommand\bibcite[2]{\org@bibcite {#1}{#2}}  
\DeclareRobustCommand\citey{%
  \@ifnextchar [{\@tempswatrue\@citey}{\@tempswafalse\@citey[]}}
\renewcommand\@citex [2][]{%
  \let\@citea\@empty
  \@cite{\@for\@citeb:=#2\do
    {\@citea\def\@citea{,\penalty\@m\ }%
     \edef\@citeb{\expandafter\@firstofone\@citeb\@empty}%
     \if@filesw\immediate\write\@auxout{\string\citation{\@citeb}}\fi
     \@ifundefined{b@\@citeb}{\mbox{\reset@font\bfseries ?}%
       \G@refundefinedtrue
       \@latex@warning
         {Citation `\@citeb' on page \thepage \space undefined}}%
       {
       \csname b@\@citeb\endcsname
       }}}{#1}}
\newcommand\@citey [2][]{%
  \let\@citea\@empty
  \@cite{\@for\@citeb:=#2\do
    {\@citea\def\@citea{,\penalty\@m\ }%
     \edef\@citeb{\expandafter\@firstofone\@citeb\@empty}%
     \if@filesw\immediate\write\@auxout{\string\citation{\@citeb}}\fi
     \@ifundefined{b@\@citeb}{\mbox{\reset@font\bfseries ?}%
       \G@refundefinedtrue
       \@latex@warning
         {Citation `\@citeb' on page \thepage \space undefined}}%
       {
       \csname b@year\@citeb\endcsname
       }}}{#1}}
\renewcommand\@newl@bel[3]{%
  \@ifundefined{#1@#2}%
    \relax
    {\gdef \@multiplelabels {%
       \@latex@warning@no@line{There were multiply-defined labels}}%
     \@latex@warning@no@line{Label `#2' multiply defined}}%
  \global\@namedef{#1@#2}{#3}%
  \global\@namedef{#1@year#2}{\@secondoftwo #3}}
\newtheorem{theorem}{Theorem}[section]
\newtheorem{lemma}[theorem]{Lemma}
\newtheorem{corollary}[theorem]{Corollary}
\newtheorem{proposition}[theorem]{Proposition}
\theoremstyle{definition}
\newtheorem{definition}[theorem]{Definition}
\newtheorem{example}[theorem]{Example}
\newtheorem{notation}[theorem]{Notation}
\theoremstyle{remark}
\newtheorem{remark}[theorem]{Remark}
\numberwithin{equation}{theorem}
\def\thmhead@plain#1#2#3{
  \thmname{#1}\thmnumber{\@ifnotempty{#1}{ }
  #2
}
  \thmnote{ {\the\thm@notefont(#3)}}}
\let\thmhead\thmhead@plain
\title[Two-Variable Pierce-Birkhoff for generalized polynomials]
{Extension of the Two-Variable Pierce-Birkhoff\\
conjecture to generalized polynomials}
\thanks
{To appear in {\it Annales de la Facult\'e des
Sciences de Toulouse\/}. 
The results in this paper were first presented at the
Conference on Ordered Rings (``Ord007"), at Louisiana
State University, Baton Rouge, Louisiana, USA, April 25--28,
2007: 
{\tt http://www.math.lsu.edu/
$\sim$madden/Ord007}.}
\dedicatory{In honor of Melvin Henriksen's 80th birthday}
\author[Charles N.~Delzell]{Charles N.~Delzell}
\address
{Department of Mathematics\\
Louisiana State University\\
Baton Rouge, Lou\-i\-si\-ana 70803\\
USA}
\email{delzell@math.lsu.edu}
\urladdr{www.math.lsu.edu/
$\sim$delzell}
\date{November 21, 2009}
\keywords{real analytic geometry, Pierce-Birkhoff, signomial,
piece\-wise-poly\-no\-mi\-al, continuous, $f$-ring, o-minimal}
\subjclass[2000]{Primary 14P15;
secondary 03C64, 06B25, 26B99, 26C05.}
\begin{document}

\begin{abstract}
[English version:]
Let $h:\mathbb R^n\to\mathbb R$ be a continuous, piecewise-polynomial
function. The Pierce-Birkhoff conjecture (1956) is that any such $h$ is
representable in the form $\sup_i\inf_jf_{ij}$, for some finite
collection of polynomials $f_{ij}\in\mathbb R[x_1,\ldots,x_n]$.
(A simple example is $h(x_1)=|x_1|=\sup\{x_1,-x_1\}$.)
In 1984, L.~Mah\'e and, independently, G.~Efroymson,
proved this for $n\le2$; it remains open for $n\ge3$. In this paper
we prove an analogous result for ``generalized polynomials'' (also
known as signomials), i.e., where the exponents are allowed to be
arbitrary real numbers, and not just natural numbers; in this version,
we restrict to the positive orthant, where each $x_i>0$. As before,
our methods work only for $n\le2$.

\smallskip
[French version:]
\selectlanguage{french}
En 1984, L.~Mah\'e, et ind\'ependammant G.~Efroymson, ont prouv\'e
le cas o\`u $n\le2$ de la conjecture de Pierce-Birkhoff (1956) :
une fonction $h:\mathbb R^n\to\mathbb R$ continue
po\-ly\-no\-mi\-ale par morceaux peut s'\'ecrire comme
$\sup_i\inf_jf_{ij}$, pour une collection finie de
polyn\^omes $f_{ij}\in\mathbb R[x_1,\ldots,x_n]$.
(Un exemple simple est $h(x_1)=|x_1|=\sup\{x_1,-x_1\}$.)
La conjecture reste ouverte pour $n\ge3$.
Dans cet article, nous prouvons (encore pour $n\le2$)
un r\'esultat analogue pour \og polyn\^omes
g\'en\'eralis\'es\fg, o\`u les exposants peuvent
\^etre des nombres r\'eels arbitraires, et non pas seulement
des nombres naturels; dans cette version, nous
limitons le domaine \`a l'orthant positif, o\`u chaque $x_i>0$.

\selectlanguage{english}

\end{abstract}

\maketitle

\section{Generalized polynomial functions\\
and generalized semialgebraic sets}
\label{signomials}

We write $\mathbb R_+=[0,\infty)$ and $\mathbb R_{++}=(0,\infty)$,
endowed with the usual, order topology. And the Cartesian product,
$\mathbb R_{++}^2:=\mathbb R_{++}\times\mathbb R_{++}$ will
be endowed with the usual, Euclidean topology.

\begin{definition}
\label{gpf}
A {\em generalized polynomial function\/} $a(x,y)$ of two
variables is a function $a:\mathbb R_{++}^2\to\mathbb R$
of the form
\begin{equation}
\label{signomial}
a:=a(x,y):=
c_1x^{\alpha_{1,1}}y^{\alpha_{1,2}}+
c_2x^{\alpha_{2,1}}y^{\alpha_{2,2}}+\dotsb+
c_mx^{\alpha_{m,1}}y^{\alpha_{m,2}},
\end{equation}
where $m\in\mathbb N:=\{0,1,2,\ldots\}$, the ``coefficients"
$c_i$ of $a$ are nonzero elements of $\mathbb R$,
and the (binary) ``exponents" $\alpha_i:=
(\alpha_{i,1},\alpha_{i,2})$
of $a$ are distinct elements of $\mathbb R^2$.
We write $\mathbb R[\mathbb R^2]$ for the ring (actually,
it is a group ring) of all generalized polynomial
functions $a:\mathbb R_{++}^2\to\mathbb R$.
\end{definition}

Thus, generalized polynomial functions (sometimes called
``signomial'' functions) of two variables can be defined, roughly,
as ``real polynomial functions on $\mathbb R_{++}^2$
with arbitrary real exponents.'' A simple example is
$a(x,y)=y-x^\pi$.

Generalized polynomial functions of two variables are
clearly real analytic on $\mathbb R_{++}^2$.

See
\cite{Delzell 2008} 
for background on the general properties
and the history of generalized polynomials (in any number
of variables), and some motivation for studying them.

\begin{definition}
\label{sss}
We call a subset $A\subseteq\mathbb R_{++}^2$
a {\it generalized semialgebraic set\/}, or a {\it
semisignomial set\/}, if it is of the form
$\bigcup_{j=1}^JS_j$, where $J\in\mathbb N$ and each $S_j$
is a ``basic semisignomial'' set, i.e., one of the form
\begin{equation}
S_j=\{\,(x,y)\in\mathbb R_{++}^2\mid
f_j(x,y)=0,\ g_{j,1}(x,y)>0,\ldots,g_{j,K_j}(x,y)>0\,\},
\label{gss}
\end{equation}
where each $K_j\in\mathbb N$ and the $f_j$ and $g_{jk}$
are generalized polynomials.
\end{definition}

(Recall that ordinary semialgebraic subsets of $\mathbb R^2$
or $\mathbb R^n$ are defined analogously, but with the
$f_j$ and $g_{jk}$ being (ordinary) polynomials.)

\section{Piecewise generalized polynomial functions}

\begin{definition}
\label{pgp}
We call a function $h(x,y):\mathbb R_{++}^2\to\mathbb R$ a
{\it piecewise generalized polynomial function\/} of two
variables if there exist $g_1,\ldots,g_l\in\mathbb R
[\mathbb R^2]$ \eqref{gpf} such that the subsets
\begin{equation}
\label{A_i}
A_i:=\{\,(x,y)\in\mathbb R_{++}^2\mid h(x,y)=g_i(x,y)\,\}
\end{equation}
are generalized semialgebraic and cover $\mathbb R_{++}^2$, i.e.,
$\mathbb R_{++}^2=\bigcup_iA_i$.

We may, and shall, assume that the $g_i$ are distinct.
\end{definition}

\begin{example}
\label{example of h}
~\nopagebreak

\begin{picture}(110,110)(-180,-10)
\put(-180,50)
{$\displaystyle
h(x,y):=\begin{cases}
y-x^\pi&\hbox{if }y\ge x^\pi,\\
0      &\hbox{if }y< x^\pi.
\end{cases}$}

{\thicklines
\put(-5,0){\vector(1,0){105}}
\put(0,-5){\vector(0,1){105}}}

\put(-7,-10)0
\put(-10,95){$y$}
\put(95,-10){$x$}
\qbezier( 0, 0    )(13.633, 0    )(20  , 1.027)
\qbezier(20, 1.027)(31.265, 2.845)(40  , 9.065)
\qbezier(40, 9.065)(50.758,16.724)(60  ,32.403)
\qbezier(60,32.403)(70.543,50.290)(80  ,80    )
\qbezier(80,80    )(82.836,88.910)(85.6,98.947)
\put(80,80){\circle*4}
\put(55,80){$(1,1)$}
\put(77,60){$y=x^\pi$}
\put(10,50){$h=y-x^\pi$}
\put(65,15){$h=0$}
\end{picture}
\end{example}

The following, technical lemma will not be needed until
Proposition~\ref{one variable} and Lemma~\ref{finer partition}
below, and can be skipped on a first reading.
In it, for any set $A$ in $\mathbb R_{++}^2$, we shall write $A^\circ$
for the interior of $A$.

\begin{lemma}
\label{union}
Let $A_1,\ldots,A_l$ be as in \eqref{pgp}.

$(1)$ \vrule width0pt depth15pt
$\displaystyle\bigcup_{i=1}^lA_i^\circ$
is dense in $\mathbb R_{++}^2$.

$(2)$ \vrule width0pt depth5pt
$A_i^\circ\cap A_j^\circ=\emptyset$ for $i\ne j$.

$(3)$ If $h$ is continuous, then each $A_i$ is closed, whence
$\overline{A_i^\circ}\subseteq A_i$.

$(4)$ If $h$ is continuous, then
$\displaystyle
\bigcup_{i=1}^lA_i^\circ=\mathbb R_{++}^2\setminus
\!\!\bigcup_{1\le i<j\le l}\!\!\bigl(\overline{A_i^\circ}
\cap\overline{A_j^\circ}\bigr)$.

$(5)$ Suppose $h$ is continuous, and $E$ is a connected subset
of $\mathbb R_{++}^2$ such that for each $(x,y)\in E$, the $l$
values $g_1(x,y),g_2(x,y),\ldots,g_l(x,y)$ are distinct.
Then there exists an $i\in\{1,2,\ldots,l\}$
such that $E\subseteq A_i^\circ$ $($in particular, such that $h=g_i$
throughout $E)$. This $i$ is unique in case $E\ne\emptyset$.

\end{lemma}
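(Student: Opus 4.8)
The plan is to treat the five parts in order, since $(4)$ rests on $(1)$--$(3)$ and the uniqueness clause of $(5)$ rests on $(2)$. For $(1)$ the crucial observation is that each basic semisignomial piece $S_j$ occurring in some $A_i$ is \emph{either open or nowhere dense}: if $f_j$ is the zero function on $\mathbb R_{++}^2$ then $S_j=\{\,g_{j,1}>0,\dots,g_{j,K_j}>0\,\}$ is open, while otherwise $f_j$ is a nonzero real-analytic function on the connected set $\mathbb R_{++}^2$, so its zero set is closed with empty interior (identity theorem for real-analytic functions) and $S_j$, being contained in that zero set, is nowhere dense. Hence each $A_i$ is a union $O_i\cup N_i$ with $O_i$ open (the union of its open pieces) and $N_i$ contained in a finite union of closed nowhere-dense sets, so $A_i\setminus A_i^\circ$, being contained in $N_i$, is nowhere dense; since the $A_i$ cover $\mathbb R_{++}^2$, the set $\bigcup_iA_i^\circ$ contains the complement of the nowhere-dense set $\bigcup_i(A_i\setminus A_i^\circ)$ and is therefore dense. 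Part $(2)$ is again the identity theorem: if $A_i^\circ\cap A_j^\circ\ne\emptyset$ then the generalized polynomial $g_i-g_j$ would vanish on a nonempty open subset of the connected set $\mathbb R_{++}^2$, forcing $g_i=g_j$, contrary to distinctness. Part $(3)$ is immediate from continuity, since $A_i=(h-g_i)^{-1}(0)$ is then closed, whence $\overline{A_i^\circ}\subseteq\overline{A_i}=A_i$.

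For $(4)$ I would write $U=\bigcup_iA_i^\circ$ and $F=\bigcup_{1\le i<j\le l}\bigl(\overline{A_i^\circ}\cap\overline{A_j^\circ}\bigr)$. The inclusion $U\cap F=\emptyset$ comes from $(2)$: a point of some $A_k^\circ$ that lies in $\overline{A_i^\circ}\cap\overline{A_j^\circ}$ with $i\ne j$ must lie in $\overline{A_m^\circ}$ for some $m\ne k$, whence the open set $A_k^\circ$ meets $A_m^\circ$ --- impossible. For $\mathbb R_{++}^2\setminus F\subseteq U$, I would show every $p\in\mathbb R_{++}^2$ lies in $U\cup F$: by $(1)$ and $(3)$, $\overline U=\bigcup_i\overline{A_i^\circ}=\mathbb R_{++}^2$, so $p$ lies in at least one $\overline{A_k^\circ}$; if it lies in two distinct ones then $p\in F$; if it lies in exactly one, say $\overline{A_k^\circ}$, pick a neighborhood $W$ of $p$ meeting no $A_j^\circ$ with $j\ne k$, so that $W\cap U=W\cap A_k^\circ$ is dense in $W$, and then $W\setminus A_k$ (open, by $(3)$) is a subset of $W$ disjoint from $A_k^\circ$, hence empty by density, giving $W\subseteq A_k^\circ$ and $p\in U$. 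Together these inclusions yield $U=\mathbb R_{++}^2\setminus F$.

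For $(5)$, uniqueness when $E\ne\emptyset$ is $(2)$. For existence, assume $E\ne\emptyset$; for each $p\in E$ the distinctness of $g_1(p),\dots,g_l(p)$ together with the covering property picks out a unique index $i(p)$ with $h(p)=g_{i(p)}(p)$, and continuity of $h$ and of the $g_k$ yields a neighborhood $W_p$ of $p$ on which $h\ne g_k$ for every $k\ne i(p)$; since the $A_k$ cover, $W_p\subseteq A_{i(p)}$, so $W_p\subseteq A_{i(p)}^\circ$ and in particular $p\in A_{i(p)}^\circ$. The same $W_p$ shows $i(\cdot)$ is locally constant on $E$ --- any $q\in W_p\cap E$ satisfies $h(q)=g_{i(p)}(q)$, and distinctness at $q$ forces $i(q)=i(p)$ --- so by connectedness of $E$ the index $i(\cdot)$ is constant, say $\equiv i$, and then $E\subseteq A_i^\circ$ with $h=g_i$ on $E$. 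I expect the genuine work to sit in $(1)$ and $(4)$: $(1)$ is where the generalized-polynomial (real-analytic) structure is actually used --- an arbitrary ``piecewise'' description would admit no such open/nowhere-dense dichotomy --- and $(4)$ is delicate because it invokes $(1)$--$(3)$ simultaneously and one must avoid conflating ``$U$ and $\mathbb R_{++}^2\setminus F$ agree off a nowhere-dense set'' with ``they coincide''.
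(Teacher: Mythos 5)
Your proof is correct, and most of it coincides with the paper's. Parts (1)--(3) and the inclusion $\bigcup_iA_i^\circ\subseteq\mathbb R_{++}^2\setminus\bigcup_{i<j}(\overline{A_i^\circ}\cap\overline{A_j^\circ})$ use the same ingredients as the paper: the dichotomy that a basic piece with $f_j\equiv0$ is open while one with $f_j\not\equiv0$ sits inside the nowhere-dense zero set of a nonzero real-analytic function (the paper bundles the latter into a single product of the $f_j$'s, you treat them piecewise---immaterial), and the identity theorem for (2). Where you genuinely diverge is in the remaining halves. For the reverse inclusion in (4), the paper argues at a point outside $\bigcup_iA_i^\circ$ with shrinking disks $B_r$ and the monotonically decreasing finite index sets $I(r)=\{\,i\mid B_r\cap A_i^\circ\ne\emptyset\,\}$, showing $|I(r)|\ge2$ for every $r$ and using monotonicity to extract one pair $i<j$ valid for all radii; your case split on how many closures $\overline{A_k^\circ}$ contain the point (at least one, by density of $\bigcup_iA_i^\circ$ and finiteness of the union) reaches the same conclusion with a single well-chosen neighborhood $W$, avoiding the monotonicity device---a slightly cleaner variant of the same topological idea. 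The larger difference is (5): the paper rephrases the distinctness hypothesis as $E\cap\bigcup_{i<j}(A_i\cap A_j)=\emptyset$, upgrades it via (3) to disjointness from $\bigcup_{i<j}(\overline{A_i^\circ}\cap\overline{A_j^\circ})$, and then quotes (4) to get $E\subseteq\bigcup_iA_i^\circ$, finishing with (2) and connectedness; you instead argue directly that at each $p\in E$ the index $i(p)$ with $h(p)=g_{i(p)}(p)$ is unique, that continuity of the finitely many $h-g_k$ gives a neighborhood $W_p$ meeting no $A_k$ with $k\ne i(p)$, hence $W_p\subseteq A_{i(p)}^\circ$, and that local constancy plus connectedness concludes (the empty case being trivial). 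Your route for (5) is more elementary---it bypasses (1), (3), and (4) entirely, needing only the covering, continuity, and (2) for uniqueness---whereas the paper's derivation displays (5) as a formal consequence of (4); both are sound, and your neighborhoods $W_p$ in effect reprove the relevant fragment of (4) locally along $E$.
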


\begin{proof}
(1) By \eqref{sss}, $\bigcup_iA_i$ is a combined, but still finite,
union of suitable basic semisignomial sets $S_j$ as in \eqref{gss}.
Let $T$ be the union of those $S_j$ for which $f_j\not\equiv0$;
thus, $T\subseteq Z(F):=\{\,(x,y)\in\mathbb R_{++}^2\mid F(x,y)=0\,\}$,
where $F$ is the product of those $f_j$'s.
$\mathbb R_{++}^2\setminus Z(F)$ is dense in $\mathbb R_{++}^2$,
by the identity theorem for real analytic functions.
{\it A fortiori\/}, $\mathbb R_{++}^2\setminus T$ is also
dense in $\mathbb R_{++}^2$. The union $U$ of the other $S_j$'s
(viz., those for which $f_j\equiv0$) must contain
$\mathbb R_{++}^2\setminus T$ (since $T\cup U=\bigcup_iA_i=\mathbb R_{++}^2$
\eqref{pgp}), and so $U$ is also dense in $\mathbb R_{++}^2$.
But $\bigcup_iA_i^\circ\supseteq U$.\footnote
{In fact, $\bigcup_iA_i^\circ=U$. But we don't need this.}

(2) If $A_i^\circ\cap A_j^\circ\ne\emptyset$, then $g_i$ would
agree with $g_j$ on a nonempty open set (by \eqref{A_i}),
and hence on all of $\mathbb R_{++}^2$ (again by the identity
theorem), contradicting the distinctness
of the $g_i$ in \eqref{pgp}.\footnote
{And if $g_i$ agrees with $g_j$ on all of $\mathbb R_{++}^2$,
then the coefficients of $g_i$ and $g_j$ (i.e., the $c$'s in
\eqref{signomial} above) would agree, too, by
\cite[Remark~4.3]{Delzell 2008}.}

(3) Obvious.

(4) $\subseteq$. Let $(x,y)\in A_i^\circ$ and suppose $j\ne i$.
It is enough to show that $(x,y)\notin\overline{A_j^\circ}$.
There exists an open disk in $A_i$ about $(x,y)$. In fact,
this disk is in $A_i^\circ$, and hence is disjoint from
$A_j^\circ$, by (2) above. Therefore
$(x,y)\notin\overline{A_j^\circ}$.\thinspace\footnote
{This half of the proof of (4) does not require the hypothesis
that $h$ be continuous.}

$\supseteq$. Suppose $(x,y)\in\mathbb R_{++}^2\setminus
\bigcup_iA_i^\circ$. For $r\in\mathbb R_{++}$ with
$r\le\min\{x,y\}$, let $B_r$ denote the open disk in
$\mathbb R_{++}^2$ of radius $r>0$ about $(x,y)$, and let
$I(r)=\{\,i\in\{1,2,\ldots,l\}\mid B_r\cap A_i^\circ\ne\emptyset\,\}$.
Then for every $r$, $|I(r)|\ge1$, by (1) above.
In fact, $I(r)>1$. Otherwise, for some $i$, $A_i^\circ\cap B_r$
would be dense in $B_r$ (by (1) again), whence $B_r=
\overline{A_i^\circ}\cap B_r\subseteq A_i\cap B_r$ (by (3)),\footnote
{In fact, this inclusion is actually an equality.}
i.e., $B_r\subseteq A_i$, whence $(x,y)\in A_i^\circ$,
contradiction. Now, for any $s\in\mathbb R_{++}$ with $s<r$,
$I(s)\subseteq I(r)$; i.e., the finite set $I(r)$ decreases
monotonically with $r$, and yet always has cardinality $\ge2$.
Thus, there exist at least two indices $i<j$ such that for
every $r\in(0,\min\{x,y\})$, $B_r$ meets $A_i^\circ$ and
$A_j^\circ$. Therefore $(x,y)\in\overline{A_i^\circ}\cap\overline
{A_j^\circ}$.

(5) The distinctness hypothesis of (5) can be rephrased as
\begin{equation*}
E\cap\bigcup_{i<j}(A_i\cap A_j)=\emptyset.
\end{equation*}
{\it A fortiori\/},
$E\cap\bigcup_{i<j}\bigl(\overline{A_i^\circ}\cap\overline{A_j^\circ})
=\emptyset$, using (3). By (4), $E\subseteq\bigcup_iA_i^\circ$.
The existence of the desired $i$ now follows from (2) and the
hypotheses that $E$ is connected. The uniqueness of $i$ in case
$E\ne\emptyset$ also follows from (2).
\end{proof}

\begin{remark}
\label{announcement of Remark "aside"}
In Remark~\ref{aside} below, we shall use \eqref{union} above
to see that when a piecewise
generalized polynomial function $h$ is continuous, each $A_i$
in \eqref{pgp} can automatically be taken to be a generalized
semialgebraic set; it is not necessary to include that
condition as a hypothesis in \eqref{pgp}.
\end{remark}

The set of piecewise generalized polynomial functions is closed
under differences and products, and so forms a ring;
it is also closed under pointwise suprema and infima,
and so forms an $l$-ring under those lattice operations.
(This ring is, of course, even an $f$-ring.) The continuous
functions in this $f$-ring comprise a sub-$f$-ring.
(See, e.g., \cite{Birkhoff et al. 1956} or
\cite{Henriksen et al. 1962} for background
on $l$-rings and $f$-rings.)

\section{Statement and discussion of the main result}

\begin{theorem}[Main Theorem: The Pierce-Birkhoff conjecture for generalized
polynomials in two variables]
\label{PBCSIG}
If $h:\mathbb R_{++}^2\to\mathbb R$ is continuous and piecewise
generalized polynomial, then $h$ is a $($pointwise\/$)$ sup of
infs of finitely many generalized polynomial functions; i.e.,
\begin{equation}
\label{SIP}
h(x,y)=\sup_j\inf_kf_{jk}(x,y)\text{ on }\mathbb R_{++}^2,
\end{equation}
for some finite number of generalized polynomials
$f_{jk}$. \rm(The converse is easy.)
\end{theorem}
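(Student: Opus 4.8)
The plan is to follow the two classical proofs (Mah\'e's and Efroymson's) of the Pierce--Birkhoff conjecture for $n\le2$, replacing their inputs from real \emph{algebraic} geometry by inputs from the o-minimal geometry of generalized polynomials on $\mathbb R_{++}^2$.

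\emph{First, a reduction to finitely many exponents.} Only finitely many exponent-pairs in $\mathbb R^2$ occur among the pieces $g_i$ of \eqref{pgp} and among the generalized polynomials $f_j,g_{jk}$ defining the sets $A_i$; together with $(1,0)$ and $(0,1)$ they generate a free abelian group $\Lambda\subseteq\mathbb R^2$, say $\Lambda\cong\mathbb Z^N$ with $\mathbb Z$-basis $\beta_1,\dots,\beta_N$. Putting $t_k:=x^{\beta_{k,1}}y^{\beta_{k,2}}$, every generalized polynomial in sight becomes an ordinary Laurent polynomial in $t_1,\dots,t_N$, and (since $(1,0),(0,1)\in\Lambda$) the map $\psi\colon\mathbb R_{++}^2\to\mathbb R_{++}^N$, $(x,y)\mapsto(t_1,\dots,t_N)$, is a homeomorphism onto a closed, $2$-dimensional, ``log-linear'' real-analytic submanifold $M:=\psi(\mathbb R_{++}^2)$ (its image under coordinatewise logarithm is a $2$-plane in $\mathbb R^N$). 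So it is equivalent to prove the theorem for $H:=h\circ\psi^{-1}$ on $M$: a continuous function, piecewise Laurent-polynomial with pieces semialgebraic \emph{in} $M$. The gain is that all the functions and sets are now definable in the o-minimal structure $\mathbb R_{\exp}$, so cell decomposition, curve selection, monotonicity and the attendant finiteness results are available --- yet the \emph{domain} $M$ is still only $2$-dimensional, which is exactly what makes the classical arguments possible (and note this is \emph{not} classical Pierce--Birkhoff, since $N$ may be $\ge3$).

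\emph{Next, partition refinement and the one-variable case.} Using an $\mathbb R_{\exp}$-cell decomposition of $M$ compatible with the closed sets $\overline{A_i^\circ}$ (and with the finitely many relevant generalized polynomials), one normalizes the partition so that on each closed cell a single $g_i$ is active and the pieces active on adjacent cells are comparable along their common face (Lemma~\ref{finer partition}); Lemma~\ref{union} controls the combinatorics of the $A_i^\circ$ and their closures. Along $1$-dimensional cells one invokes the one-variable result, Proposition~\ref{one variable}, which amounts to interpolating finitely many prescribed values on $\mathbb R_{++}$ by generalized polynomials and splicing them with $\sup$ and $\inf$.

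\emph{Finally, reduction to a separation lemma and its proof.} As in Mah\'e's treatment, it then suffices to establish a \emph{local separation} statement: for each pair of distinct pieces $g_i,g_j$, produce finitely many generalized polynomials whose $\sup$--$\inf$ combination coincides with $h$ in a neighbourhood (in $M$) of $\overline{A_i^\circ}\cap\overline{A_j^\circ}$, the set off which $g_i(x,y)\ne g_j(x,y)$ by Lemma~\ref{union}(5); patching these with a finiteness/definable-choice argument (standing in for compactness of the non-compact domain) yields \eqref{SIP}. In the classical setting the separator is built by desingularizing the boundary curve via Puiseux series and ``going around'' its singular points; here the boundary of a $2$-cell of $M$ is a finite union of $\mathbb R_{\exp}$-definable arcs admitting curve selection and Hardy-field asymptotic expansions, so the same inductive construction goes through, now assembling the separator from generalized monomials $x^\alpha y^\beta$ rather than ordinary ones. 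I expect this construction to be the principal obstacle, for two reasons: one must keep the separator inside the non-Noetherian ring $\mathbb R[\mathbb R^2]$ while performing operations that classically rely on the polynomial algebra of $\mathbb R[x_1,x_2]$; and one must control behaviour as $(x,y)$ tends to the frontier of the orthant (where $x$ or $y\to0$ or $\to\infty$), since $\mathbb R_{++}^2$ is not compact and a generalized polynomial such as $x^\pi$ does not extend smoothly to $x=0$ --- both points being handled by working throughout in the logarithmic chart, in which these ``ends'' are again o-minimally tame.
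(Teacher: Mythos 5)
Your outline shares the paper's general architecture (o-minimality standing in for semialgebraic cell decomposition, a one-variable result on strips, then a Mah\'e-style separation of pieces), but as written it has two genuine gaps, and they are exactly where the real work lies. First, the one-variable step is not ``interpolating finitely many prescribed values and splicing'': what is needed is, for each difference $a=g_i-g_j$ and each branch $y=\xi_{a,p,j}(x)$ of its zero set over a half-strip, a sup-inf of generalized polynomials that vanishes below that branch and equals $a$ above it (Lemma~\ref{Mahe's lemma}). In the paper this is produced by closing $\{g_i-g_j\}$ under Sturm-type operations adapted to real exponents --- multiplication by $y^{-\beta_1}$ and the pseudo-remainder $r_a=a-\frac{y}{\beta_K}a'$ in \eqref{closed under partial}--\eqref{r} --- proving this closure $\mathcal B$ is finite, and then running a double induction (on the number of $y$-terms and on the branch index), with Rolle's theorem and the uniform trichotomy of Lemma~\ref{projection}. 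Nothing in your proposal supplies this construction, and your reduction to Laurent polynomials on a log-linear surface $M\subseteq\mathbb R_{++}^N$ does not make it available either: it buys only definability (which one already has, e.g.\ via Miller's $\mathbb R_{\mathrm{an}}^{\mathbb R}$), while on a $2$-dimensional $M$ inside $\mathbb R_{++}^N$ there is no known Pierce--Birkhoff theorem to quote, as you yourself note.

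Second, the separation step that you defer (``the same inductive construction goes through'') is precisely where real exponents cause a new obstruction that the paper must, and does, handle explicitly. When two cells lie in no common horizontal or vertical half-strip, the paper separates them by adding to $g_{\nu(q,k)}$ a term $C\bigl((x+y-\xi_{p+1}-\eta_{q+1}-a^*)^+\bigr)^e$: here it is essential that $e$ be a \emph{natural number} chosen larger than every exponent of $g_{\nu(r,m)}-g_{\nu(q,k)}$, both so that the term is again a generalized polynomial (an arbitrary real power of a linear form is not one) and so that it dominates $g$ at infinity on the non-compact region $\Delta(T,\infty)+(\xi_{p+1},\eta_{q+1})$; and when the two cells meet at the corner ($a^*=b^*=0$) one further needs the analytic Lemma~\ref{analytic} (Weierstrass preparation and Puiseux series) to insert a linear term valid on a small double triangle $\Delta(0,\epsilon)$. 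Your plan of ``local separation near $\overline{A_i^\circ}\cap\overline{A_j^\circ}$ plus patching by definable choice'' neither constructs such separators nor explains how the patching copes with the non-compactness of $\mathbb R_{++}^2$; flagging these as ``the principal obstacle'' is accurate, but they are the theorem, and the passage to the logarithmic chart does not resolve them because the final representation must still consist of generalized polynomials in the original coordinates.
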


\begin{example}
For the $h$ in Example~\ref{example of h} above,
$h(x,y)=\sup\{0,\,y-x^\pi\}$.
\end{example}

The representation of $h$ in the form \eqref{SIP}
makes both the continuity and the piecewise generalized
polynomial character of $h$ {\it obvious\/}.

For ordinary polynomials in $\mathbb R[X,Y]$
and ordinary piecewise polynomial functions on $\mathbb R^2$,
the analog of Theorem~\ref{PBCSIG} above was first proved by
L.~Mah\'e \citey{Mahe 1984} and Efroymson (unpublished),
independently. The statement and proofs of the
Mah\'e-Efroymson theorem generalize easily to the situation
where $\mathbb R$ is replaced by an arbitrary real closed
field $R$ (furnished with the topology induced by the unique
ordering on $R$). But the fact that then the coefficients
of the $f_{jk}$ in the Mah\'e-Efroymson theorem may be
taken to lie in the subfield of $R$ generated by the
coefficients of the $g_i$ defining $h$ (in the analog
of \eqref{pgp}), was not trivial, and was proved in
\cite{Delzell 1989}.

The extension of the Mah\'e-Efroymson
theorem to functions of three or more variables (like the
extension of \eqref{PBCSIG} above) remains unproved
and unrefuted; it is known as the Pierce-Birkhoff Conjecture
(first formulated in \cite{Birkhoff et al. 1956}).

In our proof of Theorem~\ref{PBCSIG} below, we shall make
no attempt to indicate which steps generalize easily to the
case where $n>2$ (though many of those steps do).
The first reason for this is that the
notation is often simpler when $n=2$. The second reason is that,
considering the many mathematicians who have tried to prove the
Pierce-Birkhoff Conjecture for $n>2$, we now lean toward the
opinion that it and Theorem~\ref{PBCSIG} are false for $n>2$.

In 1987 we proved that for all $n\ge1$ and every real
closed field $R$, if $h:R^n\to R$ is ``piecewise-rational''
(i.e., if there are rational functions $g_1,\ldots,g_l\in R(X)$
such that the sets $A_i:=\{\,x\in R^n\mid g_i(x)
\text{ is defined and }h(x)=g_i(x)\,\}$
are s.a.\ and cover $R^n$), then there are finitely many
$f_{jk}\in R(X)$ and there is a $k\in R[X_1,\ldots,X_n]
\setminus\{0\}$ such that for all $x\in R^n$ where
$k(x)\ne0$ (i.e., for ``almost all'' $x\in R^n$),
each $f_{jk}(x)$ is defined and $h(x)=\sup_j\inf_kf_{jk}(x)$;
this is true even if $h$ is not continuous.
This result was announced in
\cite[p.~659]{Delzell 1989}, and proved in \citey{Delzell 1990}.
Madden gave an ``abstract'' version of this result
that applies to arbitrary fields (and not just
$R(X)$); see \cite{Madden 1989}. In \citey{Delzell 2005}
we proved an analog of our 1987 result, for
``generalized piecewise-rational functions''
(i.e., functions that are, piecewise,
quotients of generalized polynomial functions).

The rest of this paper will be devoted to the proof
of Theorem~\ref{PBCSIG}. In \S4 we shall develop the
necessary one-variable machinery; in \S5 we shall
deal with the additional difficulties arising in the
two-variable situation.

\section{One-variable methods}

We imitate Mah\'e's proof as much as possible.

We are given a continuous function
\begin{equation}
h(x,y)=
\begin{cases}
g_1(x,y)&\text{if \ }(x,y)\in A_1\\
\kern14pt\vdots&\kern25pt\vdots\\
g_l(x,y)&\text{if \ }(x,y)\in A_l,
\end{cases}
\label{h}
\end{equation}
where, as in \eqref{pgp}, the $g_i$ are generalized polynomials
and the $A_i$ cover $\mathbb R_{++}^2$.
(Recall from Remark~\ref{announcement of Remark "aside"}
above that the $A_i$ are also, automatically,
generalized semialgebraic; but we don't use this.)
As before, we assume the $g_i$ are distinct.

Write each $a(x,y)\in\mathbb R[\mathbb R^2]\setminus\{0\}$
\eqref{gpf} in the form
\begin{equation}
a_1(x)y^{\beta_1}+a_2(x)y^{\beta_2}+\dotsb+
a_K(x)y^{\beta_K},
\label{K}
\end{equation}
where $K\ge1$, \ $\beta_1<\dotsb<\beta_K\in\mathbb R$,
and each $a_i$ is a nonzero generalized polynomial in $x$.
This representation is unique.

Let ${\mathcal A}=\{\,g_i-g_j\mid1\le i<j\le l\,\}$.
Let $\mathcal B$ be the smallest subset of $\mathbb R[\mathbb R^2]$
containing $\mathcal A$ and closed under the following two
operations, for each $a(x,y)\in\mathcal B$
for which $K>1$ in \eqref{K}:
\begin{align}
a\vrule width0pt depth20pt 
&\mapsto
\begin{cases}
\displaystyle a':=\frac{\partial a}{\partial y}
&\text{if }\beta_1=0\text{, and}\\
\displaystyle y^{-\beta_1}a(x,y)&\text{if }\beta_1\ne0\
\footnotemark;\text{ and}
\end{cases}
\label{closed under partial}\\
a&\mapsto
\begin{cases}
\displaystyle r:=r_a(x,y)=a(x,y)-\frac y{\beta_K}\cdot a'(x,y)
&\text{if }\beta_1=0,\footnotemark\text{ and}\\
a&\text{if }\beta_1\ne0.
\end{cases}
\label{r}
\end{align}
\addtocounter{footnote}{-1}
\footnotetext{This trick (of dividing by $y^{\beta_1}$)
was first used by Sturm
\citey{Sturm 1829}
.}
\stepcounter{footnote}
\footnotetext{Here we use $\beta_K\ne0$, which follows from
$\beta_1=0$ and $K>1$.}

\begin{remark}
\label{no x involved}
Suppose no $g_i$ involves the variable $x$;
i.e., each $g_i$ is a function of $y$ alone, and is constant in $x$.
Then the same is, of course, true for each $a\in\mathcal A$;
in fact, the same is true even for each $a\in\mathcal B$, in view
of \eqref{closed under partial} and \eqref{r}.
\end{remark}

\begin{lemma}
\label{K-1 terms}
For each $a\in\mathcal B$ for which $K>1$ and $\beta_1=0$,
$a'(x,y)$ and $r_a$ each have exactly $K-1$ $y$-terms.
Consequently, $\mathcal B$ is finite.
\end{lemma}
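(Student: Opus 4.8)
The plan is to verify the ``exactly $K-1$ $y$-terms'' assertion by an explicit computation in the normal form \eqref{K}, and then to deduce the finiteness of $\mathcal B$ from it by a descent argument.

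First I would fix $a\in\mathcal B$ with $K>1$ and $\beta_1=0$, write $a=a_1(x)+a_2(x)y^{\beta_2}+\dotsb+a_K(x)y^{\beta_K}$ with $0=\beta_1<\beta_2<\dotsb<\beta_K$ and each $a_i\ne0$, and differentiate termwise. Since $\beta_1=0$ the first summand vanishes, so $a'=\sum_{i=2}^K\beta_ia_i(x)y^{\beta_i-1}$; here the exponents $\beta_2-1<\dotsb<\beta_K-1$ are pairwise distinct and each coefficient $\beta_ia_i(x)$ is a nonzero generalized polynomial in $x$ (it is the nonzero $a_i$ times the nonzero real $\beta_i$, which is nonzero because $\beta_i>\beta_1=0$), so this is already the normal form of $a'$ and it has exactly $K-1$ $y$-terms. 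For $r_a=a-\tfrac{y}{\beta_K}a'$ one computes $\tfrac{y}{\beta_K}a'=\sum_{i=2}^K\tfrac{\beta_i}{\beta_K}a_i(x)y^{\beta_i}$, whence $r_a=a_1(x)+\sum_{i=2}^K\bigl(1-\tfrac{\beta_i}{\beta_K}\bigr)a_i(x)y^{\beta_i}$; the $i=K$ term drops out, while for $2\le i\le K-1$ the scalar $1-\beta_i/\beta_K$ is nonzero (because $\beta_i\ne\beta_K$) and $a_1\ne0$, so the normal form of $r_a$ has exactly the $y$-exponents $0,\beta_2,\dotsc,\beta_{K-1}$ --- again $K-1$ of them. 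I would also record, for use below, that $r_a$ still has lowest $y$-exponent $0$, whereas the lowest $y$-exponent of $a'$ is $\beta_2-1$, which may or may not be $0$.

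For the finiteness of $\mathcal B$, I would attach to every nonzero $a\in\mathbb R[\mathbb R^2]$ the positive integer $w(a):=2K(a)-\varepsilon(a)$, where $K(a)$ denotes the number of $y$-terms of $a$ and $\varepsilon(a)\in\{0,1\}$ is $1$ exactly when the lowest $y$-exponent of $a$ is $0$. The key point is that each of the operations \eqref{closed under partial}, \eqref{r}, applied to some $a$ with $K(a)>1$, either returns $a$ unchanged or returns an element of strictly smaller $w$: if the lowest $y$-exponent of $a$ is nonzero, then \eqref{r} returns $a$ while \eqref{closed under partial} returns $y^{-\beta_1}a$, which has the same number of $y$-terms but now has lowest $y$-exponent $0$, so $w$ drops by $1$; and if the lowest $y$-exponent of $a$ is $0$, then by the first part both $a'$ and $r_a$ have only $K(a)-1$ $y$-terms, so their $w$ is at most $2(K(a)-1)<2K(a)-1=w(a)$. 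Hence in the directed graph whose arrows point from each $a$ with $K(a)>1$ to its two images under \eqref{closed under partial} and \eqref{r}, every arrow that is not a loop strictly decreases the positive integer $w$, and every vertex has out-degree at most $2$; therefore only finitely many vertices are reachable from any fixed starting vertex. Since $\mathcal A$ is finite and $\mathcal B$ is precisely the set of vertices reachable from $\mathcal A$, $\mathcal B$ is finite.

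I do not expect a genuine obstacle here. The two points needing care are: that the coefficients written down above are really nonzero generalized polynomials in $x$ --- which uses only $\beta_i\ne0$, $\beta_i\ne\beta_K$, and the triviality that a nonzero real times a nonzero generalized polynomial is again nonzero; and the choice of the potential $w$, which must be arranged to drop also under the ``normalizing'' move $a\mapsto y^{-\beta_1}a$ of \eqref{closed under partial} --- a move that does not reduce the number of $y$-terms but can be applied only once before an honest decrease of $K$ is forced, and this is exactly what the $-\varepsilon(a)$ correction encodes.
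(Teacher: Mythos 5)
Your proposal is correct and follows essentially the same route as the paper: the term count for $a'$ and $r_a$ is verified by the same explicit computation (the top term cancels, and the remaining coefficients $(1-\beta_i/\beta_K)a_i$ stay nonzero because $\beta_i/\beta_K\ne1$). The paper leaves the ``consequently finite'' step implicit, so your descent via the potential $w(a)=2K(a)-\varepsilon(a)$, which also accounts for the normalizing move $a\mapsto y^{-\beta_1}a$, is just a careful write-up of the intended argument rather than a different one.
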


\begin{proof}
This is clear for $a'(x,y)$.
For $r_a$, observe (a)~that the $K^{\rm th}$ $y$-term
$a_K(x,y)y^{\beta_K}$ in $a$ \eqref{K} is cancelled
out by the $y$-term
\begin{equation*}
\frac{y}{\beta_K}
\bigl(\beta_K\,a_K(x,y)\,y^{\beta_K-1}\bigr)
\end{equation*}
in
\begin{equation}
\label{partial}
\frac{y}{\beta_K}\cdot a'(x,y),
\end{equation}
and
(b)~that the other $y$-terms of
\eqref{partial} involve the $y$-exponents
$\beta_1,\ldots,\beta_{k-1}$, but with coefficients
different from those of the corresponding $y$-terms
of $a$ (since for each $i<K$, $\beta_i/\beta_K\ne1$).
\end{proof}

\begin{lemma}
\label{projection}
There exist $L\in\mathbb N$ and
$\gamma_1<\gamma_2<\dotsb<\gamma_L\in\mathbb R_{++}$ such that,
writing $\gamma_0=0$ and $\gamma_{L+1}=\infty$,
for each $a\in\mathcal B$ and for each $p\in\{0,1,\ldots,L\}$,
the zeros of $a(x,y)$ in the $p$th vertical half
strip $H_p:=(\gamma_p,\gamma_{p+1})\times\mathbb R_{++}$
are the graphs of
continuous, monotonic\footnote{We do not need the monotonicity
of the $\xi_{a,p,j}$ in this paper.\label{monotonic}} ``generalized
semialgebraic''\footnote
{\label{generalized semialgebraic function}We say that
a function is {\it generalized semialgebraic\/}
if its graph, in the product space, is a generalized semialgebraic
set.} functions $y=\xi_{a,p,j}(x)$, $j=1,2,\ldots,s$
$($where $s:=s(a,p)$ satisfies $0\le s\le K\>$\footnote
{Here, $K$ is as in \eqref{K};
in fact, $s$ is even bounded by the number of {\it
alternations in sign\/} in the sequence $a_0(x),\ldots,a_K  (x)$,
by Sturm's generalization \citey{Sturm 1829},
to one-variable generalized polynomials, of the Fourier-Budan theorem
(which contains Descartes' rule of signs as a special case).}$)$
with
\begin{equation*}
(0\mathrel{<)}\xi_{a,p,1}<\cdots<\xi_{a,p,s}
\text{ on }(\gamma_p,\gamma_{p+1}).
\end{equation*}
Moreover,
$\forall a_1,a_2\in\mathcal B$, \ $\forall p\le L$, \
$\forall j_1\le s(a_1,p)$, \
$\forall j_2\le s(a_2,p)$, throughout $(\gamma_p,\gamma_{p+1})
\subseteq\mathbb R_{++}$, only one of the following
three relations holds:
\begin{align}
\begin{split}
\xi_{a_1,p,j_1}&<\xi_{a_2,p,j_2},\\
\xi_{a_1,p,j_1}&=\xi_{a_2,p,j_2}\text{, or}\\
\xi_{a_1,p,j_1}&>\xi_{a_2,p,j_2}.
\label{uniform trichotomy}
\end{split}
\end{align}
\end{lemma}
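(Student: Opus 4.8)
The plan is to reduce Lemma~\ref{projection} to a finite, uniform application of the theory of semialgebraic sets over real closed fields, after clearing the (finitely many) real exponents by a monomial substitution. The key observation is that although $\mathcal B$ is infinite-looking, Lemma~\ref{K-1 terms} tells us $\mathcal B$ is \emph{finite}; hence only finitely many real numbers occur among all the exponents $\alpha_{i,j}$ (in $x$ and $y$) of all members of $\mathcal B$. Let $E\subseteq\mathbb R$ be the $\mathbb Q$-vector space spanned by that finite exponent set, say of dimension $d$, with $\mathbb Q$-basis $\lambda_1,\ldots,\lambda_d$. Then there is a surjective monomial map $\varphi:\mathbb R_{++}^{2d}\to\mathbb R_{++}^2$ (roughly, $x=\prod u_i^{\lambda_i}$, $y=\prod v_i^{\lambda_i}$, after clearing denominators so that each exponent becomes an integer combination of the $\lambda_i$) such that every $a\in\mathcal B$ pulls back to an \emph{ordinary} Laurent polynomial $\tilde a\in\mathbb R[u_1^{\pm1},\ldots,v_d^{\pm1}]$, and further to an ordinary polynomial after multiplying by a suitable monomial. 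Zero sets, sign conditions, strict inequalities, and the interior/closure operations used implicitly in ``generalized semialgebraic'' are all preserved and reflected by $\varphi$ on the positive orthant, so it suffices to prove the analogous statement for finitely many ordinary polynomials in finitely many variables over $\mathbb R$.

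**Next I would** invoke the classical semialgebraic cell-decomposition / cylindrical algebraic decomposition machinery, applied with $x$ (equivalently, the $u_i$'s, which I can treat as the first block of variables and then project) as the distinguished last variable. For a finite family of polynomials in two real variables, CAD produces a finite partition of the $x$-axis into points and open intervals such that, over each open interval, the zero set of each polynomial consists of the graphs of finitely many continuous semialgebraic functions $y=\xi_j(x)$ with $\xi_1<\cdots<\xi_s$, and such that the \emph{order relations among the branches of all the polynomials in the family are constant} over each interval --- this is exactly the ``uniform trichotomy'' \eqref{uniform trichotomy}. The finitely many breakpoints, pulled back to the $x$-axis of $\mathbb R_{++}^2$ via $\varphi$ (noting $\varphi$ is monotone on each coordinate ray, so preimages of points are points or manageable sets), together with $0$ and $\infty$, furnish the required $\gamma_0=0<\gamma_1<\cdots<\gamma_L<\gamma_{L+1}=\infty$. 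The bound $s\le K$ on the number of branches over each strip, and the sharper bound by the number of sign alternations in $a_0(x),\ldots,a_K(x)$, follows from Sturm's generalized Fourier--Budan theorem \citey{Sturm 1829} applied to $a$ as a one-variable generalized polynomial in $y$ with coefficients in the ordered field of germs at a point of $(\gamma_p,\gamma_{p+1})$ --- equivalently, from counting, for fixed generic $x_0\in(\gamma_p,\gamma_{p+1})$, the positive zeros of $a(x_0,y)$.

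**The monotonicity** of each $\xi_{a,p,j}$ (footnote~\ref{monotonic}: not actually needed) would come, if one wanted it, from shrinking the intervals further so that $\partial\tilde a/\partial u_i$ also has constant sign along each branch, i.e., by including the partial derivatives in the CAD family; since the paper disclaims needing it, I would at most remark that it is available at the cost of enlarging $L$. That each $\xi_{a,p,j}$ is generalized semialgebraic (footnote~\ref{generalized semialgebraic function}) is immediate: its graph is the $\varphi$-image (intersected with the open strip) of a piece of an ordinary semialgebraic set, and $\varphi$ together with its (semialgebraic, on the positive orthant) structure carries semialgebraic sets to generalized semialgebraic sets.

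**The main obstacle** I anticipate is bookkeeping the monomial substitution cleanly: one must choose the $\mathbb Q$-basis $\lambda_i$ and the clearing of denominators so that \emph{one} map $\varphi$ works simultaneously for \emph{all} $a\in\mathcal B$ (this is where finiteness of $\mathcal B$ is essential), check that $\varphi$ is surjective onto $\mathbb R_{++}^2$ and that it is a local analytic isomorphism generically, and verify that the order relation between two branches $\xi_{a_1,p,j_1}$ and $\xi_{a_2,p,j_2}$ upstairs in the $u,v$-coordinates corresponds correctly to the order relation downstairs in $x,y$ --- this last point uses that the $v_i$-component of $\varphi$ in the $y$-direction is built from the \emph{same} positive basis $\lambda_i$, so it is coordinatewise monotone. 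Once $\varphi$ is set up, everything else is a direct citation of cylindrical algebraic decomposition and of Sturm's theorem, with no genuinely new content.
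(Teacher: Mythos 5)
Your reduction to ordinary semialgebraic geometry does not go through, and the failure is precisely at the point you set aside as ``bookkeeping.'' If $x=\prod_i u_i^{\lambda_i}$, then the pullback of a monomial $x^{\alpha}$ with $\alpha=\sum_i n_i\lambda_i$ is $\prod_i u_i^{\alpha\lambda_i}$, whose exponents $\alpha\lambda_i$ are in general irrational; no monomial surjection $\varphi:\mathbb R_{++}^{2d}\to\mathbb R_{++}^2$ turns all members of $\mathcal B$ into Laurent polynomials unless all exponents are commensurable (the case $d=1$, where $\varphi$ is just a homeomorphism $(u,v)\mapsto(u^\lambda,v^\lambda)$ and your argument does work). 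The substitution that genuinely produces Laurent polynomials goes the other way, $u_i=x^{\lambda_i}$, $v_i=y^{\lambda_i}$: a parametrization of a transcendental monomial surface in $\mathbb R_{++}^{2d}$, so that each zero set $\{a=0\}$ becomes the intersection of an algebraic set with a non-semialgebraic surface --- about which CAD says nothing (even the finiteness of the zero set of a one-variable signomial cannot be extracted from semialgebraic cell decomposition this way). Moreover, even granting a surjective $\varphi$, the statement of the lemma is not transported: it asserts a cylindrical structure over the one-dimensional $x$-axis (finitely many $\gamma_p$, branches $y=\xi_{a,p,j}(x)$, and the trichotomy \eqref{uniform trichotomy}, which compares branch values at points with \emph{equal} $x$). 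Upstairs, ``equal $x$'' is the non-semialgebraic relation $\prod_i u_i^{\lambda_i}=\prod_i(u_i')^{\lambda_i}$, the fibers of $\varphi$ are $(d-1)$-dimensional, and a CAD of $\mathbb R^{2d}$ is cylindrical over coordinate hyperplanes, not over this foliation; its ``breakpoints'' are cells in $\mathbb R^{2d-1}$, not points one can push to the $x$-axis, and images under $\varphi$ of semialgebraic sets are not semialgebraic, so neither the strips nor the order relations among branches are reflected.

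The paper's proof uses a genuinely stronger input at exactly this point: Miller's theorem that the expansion $\mathbb R_{\mathrm{an}}^{\mathbb R}$ of the real field by all power functions $x^r$ admits quantifier elimination and analytic cell decomposition and is o-minimal and polynomially bounded. Generalized polynomials are definable there, and o-minimal cell decomposition plus the monotonicity theorem immediately yield the $\gamma_p$, the continuous monotone branches $\xi_{a,p,j}$, and the uniform trichotomy; that the graphs are ``generalized semialgebraic'' is then just the definition. Your Sturm-based bound $s\le K$ is fine but peripheral. If you wish to avoid citing o-minimality, you would need a hands-on argument (e.g., Rolle--Sturm induction on the number of $y$-terms, together with an analysis of where branches collide as $x$ varies), not a change of variables into the semialgebraic category: for $\mathbb Q$-linearly independent exponents no such reduction exists.
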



Lemma~\ref{projection} and its Corollary~\ref{corollary}
are illustrated in Figure~\ref{cylindrical figure},
which also shows the stack of open connected sets $D_{2,1},
D_{2,2},D_{2,3}$ whose union is a dense open subset of $H_2$
(looking ahead to \eqref{corollary} below).

\bigskip
\begin{figure}[htb]
\setlength\parindent{0pt}
\begin{picture}(327,200)(-143.5,-20)
{\thicklines
\put(-110, 0){\vector(1,0){280}}
\put(-100,-15){\vector(0,1){200}}
}
\put(-108,-10)0
\put(-113,175){$y$}
\put(130,-12){$x$}
\qbezier(-30,100)(-30,140)( 40,140)
\qbezier( 40,140)(110,140)(110,100)
\qbezier(-30,100)(-30, 60)( 40, 60)
\qbezier( 40, 60)(110, 60)(110,100)
\qbezier(20,100)(70,100)(120,160)
\qbezier(20,100)(70,100)(120, 40)
\put(-60, -5){\line(0,1){10}}
\put(-60, 15){\line(0,1){10}}
\put(-60, 35){\line(0,1){10}}
\put(-60, 55){\line(0,1){10}}
\put(-60, 75){\line(0,1){10}}
\put(-60, 95){\line(0,1){10}}
\put(-60,115){\line(0,1){10}}
\put(-60,135){\line(0,1){10}}
\put(-60,155){\line(0,1){10}}
\put(-30, -5){\line(0,1){10}}
\put(-30, 15){\line(0,1){10}}
\put(-30, 35){\line(0,1){10}}
\put(-30, 55){\line(0,1){10}}
\put(-30, 75){\line(0,1){10}}
\put(-30, 95){\line(0,1){10}}
\put(-30,115){\line(0,1){10}}
\put(-30,135){\line(0,1){10}}
\put(-30,155){\line(0,1){10}}
\put( 20, -5){\line(0,1){10}}
\put( 20, 15){\line(0,1){10}}
\put( 20, 35){\line(0,1){10}}
\put( 20, 55){\line(0,1){10}}
\put( 20, 75){\line(0,1){10}}
\put( 20, 95){\line(0,1){10}}
\put( 20,115){\line(0,1){10}}
\put( 20,135){\line(0,1){10}}
\put( 20,155){\line(0,1){10}}
\put(91.5, -5){\line(0,1){10}}
\put(91.5, 15){\line(0,1){10}}
\put(91.5, 35){\line(0,1){10}}
\put(91.5, 55){\line(0,1){10}}
\put(91.5, 75){\line(0,1){10}}
\put(91.5, 95){\line(0,1){10}}
\put(91.5,115){\line(0,1){10}}
\put(91.5,135){\line(0,1){10}}
\put(91.5,155){\line(0,1){10}}
\put(110, -5){\line(0,1){10}}
\put(110, 15){\line(0,1){10}}
\put(110, 35){\line(0,1){10}}
\put(110, 55){\line(0,1){10}}
\put(110, 75){\line(0,1){10}}
\put(110.5, 95){\line(0,1){10}}
\put(110,115){\line(0,1){10}}
\put(110,135){\line(0,1){10}}
\put(110,155){\line(0,1){10}}
\put(-60,31){\circle*{3}}
\put(-36, 7){$a(x,y)=0$}
\put(-39,10){\line(-1,0){10}}
\put(-49,10){\vector(-1,2){9}}
\put(-28,135){$\xi_{b,2,2}$}
\put(- 5,125){($=\xi^{2,2}$)}
\put(-28, 64){$\xi_{b,2,1}$}
\put(- 5, 68){($=\xi^{2,1}$)}
\put( 25,144){$\xi_{b,3,2}$ ($=\xi^{3,4}$)}
\put( 38,111){$\xi_{c,3,2}$}
\put( 67,105){($=\xi^{3,3}$)}
\put( 38, 88){$\xi_{c,3,1}$}
\put( 67, 88){($=\xi^{3,2}$)}
\put( 25, 52){$\xi_{b,3,1}$ ($=\xi^{3,1}$)}
\put( 88,149){$\xi_{c,4,2}$}
\put(125,131){$\xi_{b,4,2}$}
\put(117,120){($=\xi^{4,3}$)}
\put(122,133){\line(-1,0){10}}
\put(112,133){\vector(-1,-1){10}}
\put(125, 63){$\xi_{b,4,1}$}
\put(117, 52){($=\xi^{4,2}$)}
\put(122, 66){\line(-1,0){10}}
\put(112, 66){\vector(-1,1){10}}
\put( 90, 47){$\xi_{c,4,1}$}
\put(120,150){$\xi_{c,5,2}$ ($=\xi^{5,2}$)}
\put(115, 33){$\xi_{c,5,1}$ ($=\xi^{5,1}$)}
\put(-57,172){$s(1)$}
\put( 90,171){$s(4)$}
\put(-97,170){$s(0)=0$}
\put(-45,165){$=0$}
\put(-20,170){$s(2)=3$}
\put( 40,170){$s(3)=5$}
\put( 94.5,163){$=5$}
\put(125,170){$s(5)=3$}
\put(-83,17){$H_0$}
\put(-51,17){$H_1$}
\put(-11,17){$H_2$}
\put( 48,17){$H_3$}
\put( 95,17){$H_4$}
\put(130,17){$H_5$}
\put(-63,-12){$\gamma_1$}
\put(-33,-12){$\gamma_2$}
\put( 16,-12){$\gamma_3$}
\put( 88,-12){$\gamma_4$}
\put(106,-12){$\gamma_5$}
\put(-16, 37){$D_{2,1}$}
\put(-16, 97){$D_{2,2}$}
\put(-16,155){$D_{2,3}$}
\end{picture}
\caption{Illustrating Lemma~\ref{projection} and Corollary~\ref
{corollary} by showing the zeros in $\mathbb R_{++}^2$
of $a,b,c\in\mathcal B$: the isolated zero
of $a(x,y)$, and the graphs of $y=\xi_{b,p,j}(x)$
and $y=\xi_{c,p,j}(x)$ (which are also the graphs of
$y=\xi^{p,k}(x)$, for suitable $k$).
Here, $L=5$ (the number of $\gamma$'s).}
\label{cylindrical figure}
\end{figure}
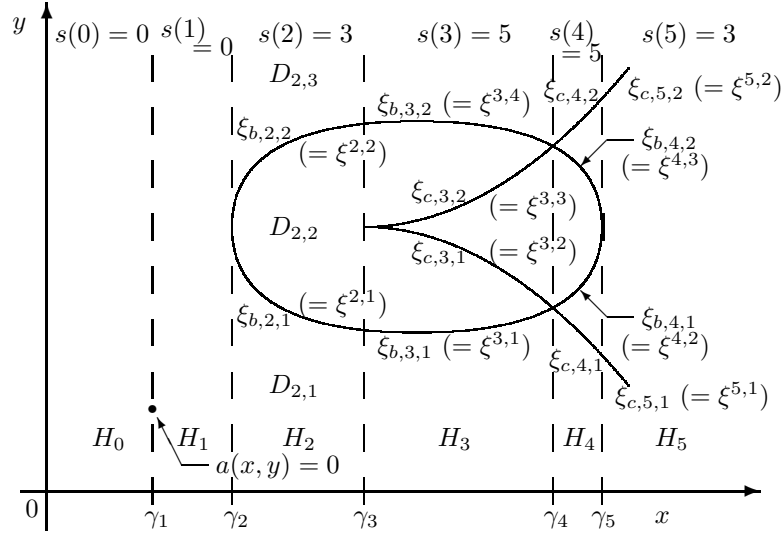

\begin{proof}
Miller \citey{Miller 1994} considered a class of
functions $f:\mathbb R^n\to\mathbb R$ that properly contains
the class of (extensions by 0 to $\mathbb R^n$ of)
generalized polynomial functions. Specifically, he
considered terms built up (in a formal language) from
variable symbols $x_1,x_2,\dotsc$ and from constants in
$\mathbb R$ by the usual operation symbols $+$, $-$, and
$\cdot$~, together with the class of operation symbols
$\{\,x_i^r\mid i\ge1,\ r\in\mathbb R\,\}$;
the symbol $x_i^r$ indicates the function
$\mathbb R\to\mathbb R$ defined by
\begin{equation*}
x_i\mapsto
\begin{cases}
x_i^r&\text{if }x_i>0\\
 0 &\text{if }x_i\le0.
\end{cases}
\end{equation*}
He considered the structure
\begin{equation*}
\mathbb R_{\text{an}}^{\mathbb R}:=
\bigl(\mathbb R,<,+,-,\,\cdot\,,0,1,(x_i^r)_{r\in\mathbb R,\>i\ge1},\,
\bigl(\tilde f\bigr)_{f\in\mathbb R\{X,n\},n\in\mathbb N}\bigr),
\end{equation*}
where $\bigl(\tilde f\bigr)_{f\in\mathbb R\{X,n\},n\in\mathbb N}$
denotes a certain class of functions $\tilde f:
\mathbb R^n\to\mathbb R$
that are analytic on $[-1,1]^n$. He proved that
the theory of $\mathbb R_{\text{an}}^{\mathbb R}$ admits
quantifier-elimination and analytic cell-decomposition,
and is universally axiomatizable, o-minimal, and
polynomially bounded.

The standard properties of o-minimal theories (cf., e.g.,
\cite{Dries 1998} or \cite{Miller 1994}) imply
that the zeros in $\mathbb R_{++}^2$ of all the various
$a\in\mathcal B$ consist of finitely many
isolated points together with the graphs of finitely
many continuous, monotonic functions $\xi_{a,p,j}:
(\gamma_p,\gamma_{p+1})\to\mathbb R_{++}$ (on suitable
intervals $(\gamma_p,\gamma_{p+1})\subseteq\mathbb R_{++}$)
satisfying \eqref{uniform trichotomy}, as stated in the lemma.
(That the $\xi_{a,p,j}$ are generalized semialgebraic is
just the definition of that term (footnote~\ref{generalized
semialgebraic function} above), since
the $a(x,y)$ are generalized polynomials.)
\end{proof}

\begin{notation}
It will be helpful in \eqref{increasing order} below
if we agree that $\xi_{a,p,0}(x)=0$ and
$\xi_{a,p,s+1}(x)=+\infty$ for all $x\in(\gamma_p,\gamma_{p+1})$,
where $p\in\{0,1,\ldots,L\}$ and $s=s(a,p)$ is as in
Lemma~\ref{projection}.
\label{xi_0}
\end{notation}

\begin{corollary}
Let $L$, $\gamma_0,\ldots,\gamma_{L+1}$, and $H_p$
be as in \eqref{projection}
, for some fixed $p\in\{0,1,\ldots,L\}$.
Then the zeros in $H_p$ of all the $a\in\mathcal B$ are
the graphs of continuous, monotonic,
generalized semialgebraic functions $y=\xi^{p,k}(x)$,
$k=1,2,\ldots,s(p)$, where $s(p)$ satisfies
$0\le s(p)\le\sum_{a\in{\mathcal B}}s(a,p)$ $($where $s(a,p)$
is as in \eqref{projection}$)$, and where,
for each $x\in(\gamma_p,\gamma_{p+1})$,
\begin{equation}
0=:\xi^{p,0}(x)<\xi^{p,1}(x)<\cdots<\xi^{p,s(p)}(x)<
\xi^{p,s(p)+1}(x):=\infty.
\label{increasing order}
\end{equation}
Consequently, the sets
\begin{equation*}
D_{p,k}:=\{\,(x,y)\mid
\gamma_p<x<\gamma_{p+1},\>\xi^{p,k}(x)<y<\xi^{p,k+1}(x)\,\},
\end{equation*}
for $k\in\{0,1,\ldots,s(p)\}$, are nonempty, pairwise-disjoint,
generalized semialgebraic cells
$($in particular, they are open and $($pathwise$)$ connected$)$,
and their union is a dense open subset of  $H_p$. Moreover,
the $D_{p,k}$ are ``stacked'' one upon the other in the $y$-direction,
so that for any $x\in(\gamma_p,\gamma_{p+1})$ and for any
$(s(p)+1)$-tuple $y_0,y_1,\ldots,y_{s(p)}\in\mathbb R_{++}$
for which each $(x,y_k)\in D_{p,k}$, $y_0<y_1<\cdots<y_{s(p)}$.
\label{corollary}
\end{corollary}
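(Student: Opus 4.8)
The plan is to read everything off Lemma~\ref{projection} (and the o-minimal cell-decomposition underlying it), in two stages: first assemble the curves $\xi^{p,k}$ out of the branches $\xi_{a,p,j}$, then verify the asserted properties of the bands $D_{p,k}$. With $p$ fixed, consider the finite family of all $\xi_{a,p,j}$, $a\in\mathcal B$, $1\le j\le s(a,p)$: these are continuous, monotonic, $\mathbb R_{++}$-valued functions on $(\gamma_p,\gamma_{p+1})$ (Lemma~\ref{projection}), and by the uniform trichotomy \eqref{uniform trichotomy} any two of them are, throughout the interval, related by exactly one of $<$, $=$, $>$. Hence ``equal on $(\gamma_p,\gamma_{p+1})$'' is an equivalence relation on this family, its classes are strictly totally ordered by $<$, and listing one representative per class in increasing order produces $\xi^{p,1}<\dots<\xi^{p,s(p)}$, with $s(p)$ the number of classes; then $0\le s(p)\le\sum_{a\in\mathcal B}s(a,p)$, each $\xi^{p,k}$ is continuous, monotonic and generalized semialgebraic (being one of the $\xi_{a,p,j}$, cf.\ the proof of Lemma~\ref{projection}), and the conventions $\xi^{p,0}:=0$, $\xi^{p,s(p)+1}:=\infty$ (Notation~\ref{xi_0}) give \eqref{increasing order}. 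For each $a\in\mathcal B$ the zero set of $a$ in $H_p$ is the union of the graphs of $\xi_{a,p,1},\dots,\xi_{a,p,s(a,p)}$ (Lemma~\ref{projection}), each of which is the graph of some $\xi^{p,k}$; so the union over all $a\in\mathcal B$ of these zero sets equals the union of the graphs of $\xi^{p,1},\dots,\xi^{p,s(p)}$, which is the first assertion.

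For the bands: nonemptiness is clear (pick $x_0\in(\gamma_p,\gamma_{p+1})$ and $y_0$ in the nonempty interval $(\xi^{p,k}(x_0),\xi^{p,k+1}(x_0))$); pairwise disjointness and the stacking property follow at once from \eqref{increasing order}, since over a fixed $x$ the intervals $(\xi^{p,k}(x),\xi^{p,k+1}(x))$, $k=0,\dots,s(p)$, are disjoint and increasing; openness follows from continuity of the $\xi^{p,k}$; path-connectedness holds because $(x,y)\mapsto\bigl(x,(y-\xi^{p,k}(x))/(\xi^{p,k+1}(x)-\xi^{p,k}(x))\bigr)$ (for $k=s(p)$, replace the second coordinate by $\arctan(y-\xi^{p,s(p)}(x))$) maps $D_{p,k}$ homeomorphically onto $(\gamma_p,\gamma_{p+1})\times(0,1)$, or simply because $D_{p,k}$ is an o-minimal cell over an interval. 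For the union, $H_p\setminus\bigcup_{k=0}^{s(p)}D_{p,k}$ equals the union of the graphs of $\xi^{p,1},\dots,\xi^{p,s(p)}$, a finite union of graphs of continuous functions on $(\gamma_p,\gamma_{p+1})$, hence closed in $H_p$; and it has empty interior because it lies in the zero set of the generalized polynomial $\prod_{a\in\mathcal B}a$, which is nonzero (every $a\in\mathcal B$ is nonzero: the operations \eqref{closed under partial} and \eqref{r} are applied only when $K>1$, and by Lemma~\ref{K-1 terms} their output retains $\ge1$ $y$-term) and so has nowhere dense zero set by the identity theorem, as in the proof of Lemma~\ref{union}(1). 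Thus $\bigcup_k D_{p,k}$ is open and dense in $H_p$.

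The remaining, and main, point is that each $D_{p,k}$ is generalized semialgebraic. The strip $\gamma_p<x<\gamma_{p+1}$ is: when $\gamma_p,\gamma_{p+1}$ are finite it is $\{x-\gamma_p>0\}\cap\{\gamma_{p+1}-x>0\}$, with $x-\gamma_p$ and $\gamma_{p+1}-x$ generalized polynomials (for $\gamma_0=0$ or $\gamma_{L+1}=\infty$ one constraint is vacuous). Since the generalized semialgebraic sets form a Boolean algebra (finite unions of basic pieces; $\{f=0\}$, $\{f>0\}$, $\{f<0\}$ are all available --- the last two by taking $f_j\equiv0$ in \eqref{gss} --- and the class is closed under complement and finite intersection, the latter by combining equalities as sums of squares), it suffices to show that for $(x,y)\in H_p$ the conditions $\xi^{p,k}(x)<y$ and $y<\xi^{p,k+1}(x)$ are generalized semialgebraic, as $D_{p,k}$ is their intersection with the strip. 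Writing $\xi^{p,k}=\xi_{a,p,j}$, the value $\xi_{a,p,j}(x)$ is the $j$th smallest zero of the one-variable generalized polynomial $a(x,\cdot)$ in $\mathbb R_{++}$, so $\xi_{a,p,j}(x)<y$ amounts to a statement about the number of zeros of $a(x,\cdot)$ in $(0,y)$; by the Sturm-type root-counting theory for one-variable generalized polynomials \cite{Sturm 1829} --- which is precisely why $\mathcal B$ was closed under the derivative-type passage \eqref{closed under partial}, Lemma~\ref{K-1 terms} guaranteeing finiteness of the resulting chain --- that count is read off from the signs, at $(x,y)$ and at $(x,0^{+})$, of finitely many generalized polynomials built from $a$, the signs at $(x,0^{+})$ being the signs of the lowest-$y$-degree coefficients, generalized polynomials in $x$ alone. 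Hence $\{(x,y)\in H_p:\xi^{p,k}(x)<y\}$ is a Boolean combination of generalized-polynomial sign conditions, hence generalized semialgebraic, and similarly for $\{y<\xi^{p,k+1}(x)\}$; intersecting with the strip gives $D_{p,k}$ generalized semialgebraic. I expect this third step to be where the actual work lies --- the first two are bookkeeping over Lemma~\ref{projection} --- and it is exactly the point at which the design of $\mathcal B$ (closure under \eqref{closed under partial} and \eqref{r}, finiteness via Lemma~\ref{K-1 terms}) and of the $\gamma_p$ (uniformity \eqref{uniform trichotomy}) must be cashed in through one-variable root-counting for generalized polynomials.
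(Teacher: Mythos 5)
Your first two stages essentially reproduce the paper's own proof and its unstated routine consequences: the paper's proof of Corollary~\ref{corollary} is exactly the relabelling step, namely that $\xi^{p,1},\ldots,\xi^{p,s(p)}$ are a suitable permutation and relabelling of the set $\{\,\xi_{a,p,j}\mid a\in\mathcal B,\ 1\le j\le s(a,p)\,\}$, the existence of an increasing arrangement \eqref{increasing order} being guaranteed by the uniform trichotomy \eqref{uniform trichotomy}; your identification of equal branches and your verifications of nonemptiness, disjointness, openness, connectedness, stacking and density are correct elaborations of what the paper treats as immediate.

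The problem is your third stage. You claim that, for $\xi^{p,k}=\xi_{a,p,j}$, the condition $\xi_{a,p,j}(x)<y$ is equivalent to a Boolean combination of signomial sign conditions because the exact number of zeros of $a(x,\cdot)$ in $(0,y)$ can be ``read off'' from the signs, at $(x,y)$ and at $(x,0^+)$, of finitely many generalized polynomials built from $a$, citing Sturm (1829). But the result the paper invokes from that source (see the footnote to Lemma~\ref{projection}) is of Fourier--Budan/Descartes type: the number of roots is \emph{bounded} by the number of sign alternations, with the difference a nonnegative even integer; it is not an exact count. An exact count in Sturm's sense requires a remainder (Euclidean) sequence, which is not available for arbitrary real exponents, and the tower generated by \eqref{closed under partial} and \eqref{r} is a derivative-type (Budan--Fourier) chain, not a Sturm chain; multiple or ``invisible'' roots make the variation count overshoot, so the pointwise sign data you propose does not determine on which side of the $j$th branch the point $(x,y)$ lies. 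Hence the key claim that $\{(x,y)\in H_p:\xi^{p,k}(x)<y\}$ is generalized semialgebraic is not established by your argument. For comparison, the paper does not argue this point at all: it takes the cell assertions (including generalized semialgebraicity of the $D_{p,k}$) as consequences of Lemma~\ref{projection}'s assertion that the $\xi_{a,p,j}$ are generalized semialgebraic functions. If you want an actual proof, you must either start from that assertion about the graphs and explain how to pass to the open regions between consecutive graphs (note that semisignomial sets are not known to be closed under projection, so this is not automatic), or replace the exact-count claim by a correct sign-separation argument, e.g.\ showing that the full sign vector of $\mathcal B$ distinguishes the bands $D_{p,0},\ldots,D_{p,s(p)}$ --- which itself requires proof, since a priori only adjacent bands are separated by a sign change of some element of $\mathcal B$.
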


\begin{proof}
The required sequence $\xi^{p,1},\xi^{p,2},\ldots,\xi^{p,s(p)}$ of
functions is just a suitable permutation and relabelling of the set of
functions $\{\,\xi_{a,p,j}\mid a\in{\mathcal B},\>1\le j\le s(a,p)\,\}$.
That a permutation of the $\xi$'s satisfying \eqref{increasing order}
exists follows from \eqref{uniform trichotomy}.
\end{proof}

\begin{proposition}
\label{closed}
The set of suprema of infima of finitely many generalized polynomial
functions is closed under subtraction and multiplication,
and so is a ring.
\end{proposition}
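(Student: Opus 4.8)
The plan is to work inside the (pointwise) $f$-ring of all functions $\mathbb R_{++}^2\to\mathbb R$, in which $\mathbb R[\mathbb R^2]$ sits as a subring, and to show directly that the set $S$ of suprema of infima of finitely many generalized polynomials is closed under subtraction and multiplication. Two elementary facts about the totally ordered set $\mathbb R$ are used throughout. (i) $\mathbb R$ is a distributive lattice, so finite joins and meets distribute over one another; consequently every finite $\inf\sup$ of generalized polynomials can be rewritten as a finite $\sup\inf$ of generalized polynomials, so that $S$ is exactly the sublattice of the function ring generated by $\mathbb R[\mathbb R^2]$, and every $h\in S$ has a representation $h=\sup_i\inf_j a_{ij}$ with $a_{ij}\in\mathbb R[\mathbb R^2]$ and all index sets finite. (ii) If $c\colon\mathbb R_{++}^2\to\mathbb R$ satisfies $c\ge0$, then multiplication by $c$ commutes with finite $\vee$ and $\wedge$: at a point where $c=0$ both sides vanish, and where $c>0$ multiplication by $c$ is an order-automorphism of $\mathbb R$.

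First I would dispose of the easy closure properties. By (i), $S$ is closed under $\vee$ and $\wedge$ (merge the normal forms). It is closed under $+$ since translations are order-automorphisms of $\mathbb R$, whence $(\sup_i\inf_j a_{ij})+(\sup_k\inf_l b_{kl})=\sup_{i,k}\inf_{j,l}(a_{ij}+b_{kl})$; and under negation since $t\mapsto-t$ is an order-anti-automorphism, so $-\sup_i\inf_j a_{ij}=\inf_i\sup_j(-a_{ij})$, which is put back into $\sup\inf$ form by (i). Hence $S$ is a lattice-ordered subgroup of the function ring; in particular, for $h\in S$ the parts $h^{+}:=h\vee0$ and $h^{-}:=(-h)\vee0$ lie in $S$, are $\ge0$, and satisfy $h=h^{+}-h^{-}$. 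Finally, if $h\in S$ satisfies $h\ge0$ and $h=\sup_i\inf_j a_{ij}$, then distributivity gives $h=h\vee0=\sup_i\inf_j(a_{ij}\vee0)$, so $h$ has a $\sup\inf$ representation in which all ``leaves'' $a_{ij}\vee0$ are $\ge0$.

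The real content is closure under multiplication, and I expect the sign behaviour of the two factors to be the only genuine obstacle. Since $h_1h_2=h_1^{+}h_2^{+}-h_1^{+}h_2^{-}-h_1^{-}h_2^{+}+h_1^{-}h_2^{-}$ and $S$ is closed under $\pm$, it suffices to show $uv\in S$ whenever $u,v\in S$ with $u,v\ge0$. I would get there in three steps. Step 1: if $c\in\mathbb R[\mathbb R^2]$ is $\ge0$ on $\mathbb R_{++}^2$ and $h\in S$ with $h\ge0$, then $ch\in S$; indeed, writing $h=\sup_i\inf_j(a_{ij}\vee0)$ as above, fact (ii) gives $ch=\sup_i\inf_j c(a_{ij}\vee0)=\sup_i\inf_j\bigl((ca_{ij})\vee0\bigr)$, a $\sup\inf$ of generalized polynomials. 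Step 2: if $c\in\mathbb R[\mathbb R^2]$ is arbitrary and $h\in S$ with $h\ge0$, then $ch\in S$; here I use the positive orthant, splitting $c$ according to the signs of its coefficients to write $c=c_1-c_2$ with $c_1,c_2\in\mathbb R[\mathbb R^2]$ both $\ge0$ on $\mathbb R_{++}^2$ (each monomial $x^{\alpha}y^{\beta}$ being positive there), so that $ch=c_1h-c_2h\in S$ by Step 1 and closure under $-$. Step 3: if $u,v\in S$ with $u,v\ge0$, then $uv\in S$; writing $v=\sup_k\inf_l(b_{kl}\vee0)$ with $b_{kl}\in\mathbb R[\mathbb R^2]$, fact (ii) gives $uv=\sup_k\inf_l u(b_{kl}\vee0)=\sup_k\inf_l\bigl((ub_{kl})\vee0\bigr)$, where $ub_{kl}\in S$ by Step 2, hence $(ub_{kl})\vee0\in S$ and so $uv\in S$.

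Putting it together: multiplicative closure is Step 3 combined with closure under $\pm$, and subtractive closure is already among the easy properties ($h_1-h_2=h_1+(-h_2)$). The one non-formal ingredient is the decomposition $c=c_1-c_2$ of a generalized polynomial into a difference of nonnegative ones in Step 2 --- this is exactly where the restriction to the positive orthant $\mathbb R_{++}^2$ is used, and it is the device that lets one push a sign-changing polynomial factor through the $\sup$'s and $\inf$'s.
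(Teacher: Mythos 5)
Your argument is correct, but it takes a genuinely different route from the paper. The paper does not verify the closure properties by hand at all: it quotes a general $f$-ring result of Henriksen and Isbell (Corollary~3.4 of their 1962 paper) --- the least lattice of functions containing a ring $S$ of real-valued functions is again a ring --- applied with $S=\mathbb R[\mathbb R^2]$, and notes that the first complete published proof of that statement is the recent one of Hager and Johnson. You instead give a direct, self-contained verification: distributivity of the chain $\mathbb R$ to put everything in $\sup\inf$ normal form and get the additive/lattice closure, the pointwise decomposition $h=h^{+}-h^{-}$ to reduce multiplication to nonnegative factors, the observation that a nonnegative factor passes through finite $\vee$ and $\wedge$, and --- your one non-formal ingredient --- the splitting $c=c_1-c_2$ of a generalized polynomial into nonnegative ones, which exploits the positivity of monomials on $\mathbb R_{++}^2$. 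All the pointwise identities you invoke do hold, so the proof stands. What each approach buys: the paper's citation is shorter and immediately more general (any subring of any $f$-ring), but leans on an external reference; yours is elementary and makes the paper self-contained at this point, at the cost of being tailored to the positive orthant --- though even that restriction is inessential, since for an arbitrary ring of real-valued functions one could replace your orthant trick by the universal decomposition $c=\tfrac12\bigl((c+1)^2\bigr)-\tfrac12\bigl(c^2+1\bigr)$ into nonnegative elements, after which your Steps 1--3 reproduce, in essence, the kind of argument Henriksen--Isbell left as an exercise.
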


\begin{proof}
This is a special case of a result of
Henriksen and Isbell \citey[Corollary~3.4]{Henriksen et al. 1962}:
If $S$ is a ring of real-valued functions on a set,
then the least lattice of functions that contains $S$
is also a ring. Here we may take $S=\mathbb R[\mathbb R^2]$
\eqref{gpf}.
For the proof of this corollary,
Henriksen and Isbell gave some $f$-ring identities which,
they said, reduce the proof to an exercise; they omitted
the details. \cite{Delzell 1989} gave a sketch of a proof.
The first complete proof of this fact to appear in print was
that of \cite[Theorem~1(B)]{Hager et al. 2010}; their proof incorporates
some simplifications due to Madden, and their statement is a little
more general than the Henriksen-Isbell statement above,
in that now $S$ may be an arbitrary subring of an arbitrary
$f$-ring.
\end{proof}

In the next lemma it will helpful to use the abbreviation
$a^+=\sup\{0,a\}$, for any real-valued function $a$.

\begin{lemma}[Generalized Mah\'e lemma]
\label{Mahe's lemma}
Using the notation of Lemma~\ref{projection} above,
for each $p\in\{0,1,\ldots,L\}$, each $a(x,y)\in\mathcal B$,
and each $j\in\{0,1,\ldots,s\}$ $($where $s=s(a,p)$
as in \eqref{projection}$)$,
there exists a function $c_{a,p,j}(x,y)$
that is a sup of infs of finitely many generalized
polynomials, such that for all $x\in(\gamma_p,\gamma_{p+1})$
and for all $y\in\mathbb R_{++}$,
\begin{equation}
\label{c}
c_{a,p,j}(x,y)=
\begin{cases}
a(x,y)&\text{if }y>\xi_{a,p,j}(x)\text{, and}\\
0   &\text{otherwise.}
\end{cases}
\end{equation}
\end{lemma}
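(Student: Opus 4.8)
The plan is to follow Mah\'e's inductive scheme, adapted to generalized polynomials, keeping every intermediate function inside the set of sups of infs of generalized polynomials; by Proposition~\ref{closed} this set is a ring containing $\mathbb R[\mathbb R^2]$, hence is closed under $\sup$, $\inf$, subtraction, negation, and multiplication by any fixed generalized polynomial such as $y^{\beta_1}$ or $y/\beta_K$. Two reductions come first. Since $\xi_{a,p,0}\equiv0$ and $y>0$ on $\mathbb R_{++}$, the case $j=0$ is settled by $c_{a,p,0}:=a$; and if $a$ has a single $y$-term, $a=a_1(x)y^{\beta_1}$, then its zero set in $H_p$ is a union of full vertical segments, which Lemma~\ref{projection} forbids unless it is empty, so $s(a,p)=0$ and only $j=0$ occurs there. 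Thus we may assume $j\ge1$ and $K\ge2$. Next, if $\beta_1\ne0$, replace $a$ by $\widetilde a:=y^{-\beta_1}a\in\mathcal B$ \eqref{closed under partial}: this has bottom $y$-exponent $0$, the same zero set in $\mathbb R_{++}^2$ (hence the same curves $\xi_{a,p,j}$ and the same $s(a,p)$), and a solution $c_{\widetilde a,p,j}$ yields $c_{a,p,j}:=y^{\beta_1}c_{\widetilde a,p,j}$. So it suffices to prove the lemma for $\beta_1=0$, $K\ge2$, and $1\le j\le s:=s(a,p)$, which I would do by induction on $K$.

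For the induction step write $a=\frac{y}{\beta_K}\,a'+r_a$, where $a'=\partial a/\partial y$ and $r_a$ (as in \eqref{closed under partial}--\eqref{r}) both lie in $\mathcal B$ and, by Lemma~\ref{K-1 terms}, have exactly $K-1$ $y$-terms; hence the lemma holds for $a'$ and for $r_a$, in every strip and for every admissible index, by the induction hypothesis. Set $\xi:=\xi_{a,p,j}$; by the uniform trichotomy of Lemma~\ref{projection}, either $\xi$ coincides with one of the zero curves of $a'$ on $(\gamma_p,\gamma_{p+1})$, or $a'$ is nowhere zero along $\xi$. In the first case, say $\xi=\xi_{a',p,j'}$: then $a\equiv a'\equiv0$ along $\xi$, so $r_a\equiv0$ along $\xi$ too, and Lemma~\ref{projection} (for $r_a$) forces $\xi=\xi_{r_a,p,j''}$ for a unique $j''$; the function $c_{a,p,j}:=\frac{y}{\beta_K}\,c_{a',p,j'}+c_{r_a,p,j''}$ then works, since it equals $\frac{y}{\beta_K}a'+r_a=a$ where $y>\xi$ and $0$ where $y\le\xi$, and is a sup of infs by Proposition~\ref{closed}.

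The substantial case is when $a'$ is nowhere zero along $\xi$: then $\xi$ is a simple zero of each $a(x,\cdot)$ at which $a$ changes sign, and we may assume $a>0$ just above $\xi$ (otherwise negate throughout, which replaces the induction data for $a'$, $r_a$ by their negatives). Let $\phi<\xi<\psi$ on $(\gamma_p,\gamma_{p+1})$ be the topmost zero curve of $a'$ below $\xi$ and the bottommost one above it (with $\phi=0$ or $\psi=\infty$ if none exists), so $\phi,\psi$ are consecutive zero curves of $a'$. By Rolle's theorem (valid since the $a(x,\cdot)$ are real analytic) together with Lemma~\ref{projection}, $a'>0$ throughout the band $\{\phi<y<\psi\}\cap H_p$ and $a$ has no zero there other than $\xi$; consequently $a^+:=\sup\{0,a\}$ already agrees with the desired $c_{a,p,j}$ on that band. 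One then obtains $c_{a,p,j}$ by \emph{splicing}: $a^+$ must be pushed down to $0$ below $\phi$ and up to $a$ above $\psi$, and this is achieved by finitely many applications of $\sup$, $\inf$, subtraction, and multiplication by $\frac{y}{\beta_K}$ to $a^+$, to $a$, and to the functions $c_{a',p,\,\cdot}$ and $c_{r_a,p,\,\cdot}$ from the induction hypothesis, anchored at $\phi$, $\psi$ and the adjacent zero curves; exactly which combination is correct is decided, uniformly in $x\in(\gamma_p,\gamma_{p+1})$, by a finite case analysis on the interlacing of the zero curves of $a$, $a'$, $r_a$ (governed by Rolle) and on the constant signs these functions take in the resulting bands (governed by Lemma~\ref{projection}).

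Since $\mathcal B$ is finite (Lemma~\ref{K-1 terms}) and passing from an $a$ with $\beta_1=0$ to $a'$ or $r_a$ strictly lowers the number of $y$-terms, the induction terminates, and as there are only finitely many strips $p$ and indices $j$ the lemma follows. I expect the only genuinely delicate step to be the bookkeeping in the simple-zero case: producing a single lattice--ring expression in the inductive data and verifying that it equals $c_{a,p,j}$ on \emph{all} of $H_p$, not merely in a neighborhood of $\xi$; everything else is either trivial or a direct use of Proposition~\ref{closed} and the uniform geometry provided by Lemma~\ref{projection}.
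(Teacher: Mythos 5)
Your setup (induction on $K$, the reduction to $\beta_1=0$ via $y^{-\beta_1}$, the decomposition $a=\frac{y}{\beta_K}a'+r_a$, the base cases $j=0$ and $K=1$, and the degenerate case where $a'$ vanishes along $\xi_{a,p,j}$) matches the paper's proof, and that part is fine. But in the main case --- where $a'$ is nonzero along $\xi_{a,p,j}$ --- you stop exactly where the lemma's real content begins: ``splicing \dots\ achieved by finitely many applications of $\sup$, $\inf$, subtraction, and multiplication \dots\ exactly which combination is correct is decided \dots\ by a finite case analysis'' is a restatement of the goal, not a construction, and you acknowledge as much in your last sentence. Asserting that some lattice--ring combination of the inductive data works is precisely what has to be proved; nothing in Lemma~\ref{projection} or Proposition~\ref{closed} guarantees it.

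Two concrete devices that close this gap in the paper are absent from your sketch. First, there is a \emph{second, inner induction on $j$}: the already-constructed $c_{a,p,j-1}$ is used as a cap, via $\inf\{c_{a,p,j-1}^+,\,\cdot\,\}$, to force the value $0$ on the region below $\xi_{a,p,j-1}$, where $a$ may well be positive again; your plan to ``push $a^+$ down to $0$ below $\phi$'' using only data attached to $a'$ and $r_a$ has no mechanism for this. Second, above $\xi_{a,p,j}$ the transition from $a^+$ back to $a$ (past the next sign change of $a$) is effected by the explicit function $g=\frac{y}{\beta_K}c_{a',p,k}+c_{r_a,p,l}$, where $k$ is minimal with $\xi_{a,p,j}\le\xi_{a',p,k}$ and $l$ minimal with $\xi_{a',p,k}\le\xi_{r_a,p,l}$; one needs Rolle to get $\xi_{a',p,k}<\xi_{a,p,j+1}$, and the sign computation $r_a(x,\xi_{a',p,k})=a(x,\xi_{a',p,k})>0$ to conclude $g<a$ on the band $\xi_{a',p,k}\le y<\xi_{r_a,p,l}$, whence $\sup\{a,g\}$ equals $a^+$ below $\xi_{a,p,j}$ and $a$ above it. The final answer is then $c_{a,p,j}=\inf\{c_{a,p,j-1}^+,\,\sup\{a,g\}\}$ (or $\inf\{c_{a,p,j-1}^+,a^+\}$ when $\xi_{a',p,k}=\infty$), verified on all of $H_p$ using \eqref{uniform trichotomy}. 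Without the inner induction on $j$, the minimal choices of $k,l$, and this sign argument, your ``finite case analysis'' has not been shown to terminate in a valid expression, so the proof is incomplete at its central step.
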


\begin{proof}
Fix any $p\le L$.

We use induction on $K\ge1$,
the number of distinct $y$-exponents occurring in $a$
(recall \eqref{K}). Note that for any $K\ge1$,
we may (in fact, we must) take $c_{a,p,0}=a$;
this handles the case $K=1$,
i.e., the case where $a(x,y)$ is of the form
$a_1(x)y^{\beta_1}$ (which implies $s(a,p)=0$ for each
$p\le L$).

Now assume $K>1$.

We claim that we may assume
\begin{equation}
\label{beta_1=0}
\beta_1=0.
\end{equation}
If not, then write $b(x,y)=y^{-\beta_1}a(x,y)$.
Thus $b\in\mathcal B$, by \eqref{closed under partial}.
Note that $b(x,y)$
has the same positive $y$-roots $\xi$ as $a(x,y)$ has;
thus $s(a,p)=s(b,p)$.
Therefore, if for each $j\le s(b,p)$ we can construct
$c_{b,p,j}$ such that
\begin{equation*}
c_{b,p,j}(x,y)=
\begin{cases}
b(x,y)&\text{if }y>\xi_{b,p,j}(x)\text{, and}\\
0   &\text{otherwise,}
\end{cases}
\end{equation*}
then we may, for each $j\le s(a,p)$ ($=s(b,p)$),
take $c_{a,p,j}(x,y)=y^{\beta_1}c_{b,p,j}(x,y)$; the latter
product is a sup of infs of finitely many generalized
polynomials, since $c_{b,p,j}$ is, and since $y^{\beta_1}>0$
for all $y>0$ (or use \eqref{closed}).

Next, recall that $a'$ \eqref{closed under partial} and
$r_a$ \eqref{r} each have exactly $K-1$ $y$-terms,
by \eqref{K-1 terms} and \eqref{beta_1=0}.
Thus we assume, by the inductive hypothesis,
that for every $k\le s(a',p)$ and $l\le s(r_a,p)$,
we can construct $c_{a',p,k}$ and $c_{r_a,p,l}$
satisfying the appropriate analogs of \eqref{c}.
Note that $c_{a',p,k}$ and $c_{r_a,p,l}$ are, in
particular, continuous (either by their form as in
\eqref{c}, or by the fact that they are sups of
infs of finitely many generalized polynomial functions).

Finally, in order to construct $c_{a,p,j}$, we now use
induction on $j\in\{0,1,2,\ldots,\linebreak[0]
s(a,p)\}$.
We have already constructed $c_{a,p,0}$,
so now we assume that $j\in\{1,2,\ldots,\linebreak[0]
s(a,p)\}$ and that $c_{a,p,j-1}$ has
already been constructed with the properties stated in
Lemma~\ref{Mahe's lemma}.

Throughout the rest of this proof, $x$ will range over
$(\gamma_p,\gamma_{p+1})$.
By the uniform trichotomy in \eqref{uniform trichotomy},
all order relations involving the various $\xi$'s
below will hold uniformly for such $x$;
thus we 
usually write, e.g., $\xi_{a,p,j}$
instead of $\xi_{a,p,j}(x)$.\linebreak
Let $k$ be the smallest index such
that $\xi_{a,p,j}\le\xi_{a',p,k}$ (then $1\le k\le
1+s(a',p)$).\linebreak
Let $l$ be the smallest index such
that $\xi_{a',p,k}\le\xi_{r_a,p,l}$ (then $1\le l\le
1+s(r_a,p)$).
Then
\begin{equation}
\label{Rolle}
\xi_{a',p,k}<\xi_{a,p,j+1}\text{\quad
(unless $\xi_{a',p,k}=\infty$), by Rolle's theorem, and}
\end{equation}
\begin{align}
g(x,y):&=\frac y{\beta_K}c_{a',p,k}(x,y)+c_{r_a,p,l}(x,y)\notag\\
&=
\begin{cases}
\ 0&\text{if }0<y<\xi_{a',p,k},\\
\displaystyle\frac y{\beta_K}a'(x,y)=a(x,y)-r_a(x,y)
&\text{if }\xi_{a',p,k}<y<\xi_{r_a,p,l},\\
\vrule width0pt height13pt
\displaystyle\frac y{\beta_K}a'(x,y)+r_a(x,y)
=a(x,y)&\text{if }\xi_{r_a,p,l}<y,
\end{cases}
\label{g}
\end{align}
where \eqref{g} follows from \eqref{r} and from the
definitions of $c_{a',p,k}$ and $c_{r_a,p,l}$.\footnote
{In \eqref{g}, the inequalities in the case-distinctions
$y<\xi_{a',p,k}$, $\xi_{a',p,k}<y<\xi_{r_a,p,l}$, and
$\xi_{r_a,p,l}<y$ are all strict (i.e., they are all $<$,
and not $\le$).
This strictness is necessary because $\xi_{a',p,k}$
and/or $\xi_{r_a,p,l}$ could be $\infty$.
If either or both of the $\xi$'s are finite,
the corresponding inequalities could be relaxed to
nonstrict inequalities (with $\le$). But even without
such a relaxation, \eqref{g} still uniquely determines $g$
even when $y$ is $\xi_{a',p,k}$ or $\xi_{r_a,p,l}$, since
$g$ is continuous for all $y>0$.}
This function $g$ is a supremum of infima of finitely many
generalized polynomial functions, by \eqref{closed}.

If $a'(x,\xi_{a,p,j})=0$, then
\begin{alignat*}2
\xi_{a',p,k}&=\xi_{a,p,j}&&
\text{by the minimality of $k$, and}\\
\xi_{r_a,p,l}&=\xi_{a',p,k}\quad&&
\text{by \eqref{r} and the minimality of $l$.}
\end{alignat*}
Thus we may take $c_{a,p,j}=g$, by \eqref{g}.

Now suppose, on the other hand, that
\begin{equation}
\label{a' ne 0}
a'(x,\xi_{a,p,j})\ne0
\end{equation}
(recall \eqref{uniform trichotomy}). (Then
\begin{equation}
\xi_{a,p,j}<\xi_{a',p,k}.)
\label{xi inequality}
\end{equation}
We may assume that in fact
\begin{equation}
\label{a'>0}
a'(x,\xi_{a,p,j})>0,
\end{equation}
by \eqref{uniform trichotomy}, by replacing $a$
with $-a$, and by the fact that $-c_{-a,p,j}$
($=c_{a,p,j}$) will still be a supremum of infima
of finitely many generalized polynomial functions
if $c_{-a,p,j}$ is, by \eqref{closed}.
Then
\begin{alignat}2
\label{a<0}
a(x,y)&<0\quad&&\text{for}\quad\xi_{a,p,j-1}<y<a_{a,p,j}
\quad\text{and}\\
\label{a>0}
a(x,y)&>0\quad&&\text{for}\quad\xi_{a,p,j}<y<a_{a,p,j+1},
\end{alignat}
by \eqref{a'>0}.

First suppose $\xi_{a',p,k}=\infty$ (i.e.,
$k=1+s(a',p)$). Then $a'(x,y)>0$ for all $y>\xi_{a,p,j}$,
whence $a(x,y)>0$ for all $y>\xi_{a,p,j}$.
Hence we may take $c_{a,p,j}=\inf\{c_{a,p,j-1}^+,a^+\}$,
using also \eqref{a<0}.

Second, suppose $\xi_{a',p,k}<\infty$ (i.e.,
$k\le s(a',p)$). Then
\begin{align}
r_a(x,\xi_{a',p,k})
&=a(x,\xi_{a',p,k})-
\frac{\xi_{a',p,k}}{\beta_K}a'(x,\xi_{a',p,k})\ \ 
\text{(by \eqref{r})}\notag\\
&=a(x,\xi_{a',p,k})-\frac{\xi_{a',p,k}}{\beta_K}\cdot0
\notag\\
&=a(x,\xi_{a',p,k})>0,\quad 
\text{by \eqref{a>0},
       \eqref{Rolle}, and \eqref{xi inequality}}.
\label{r>0}
\end{align}
Then for $\xi_{a',p,k}\le y<\xi_{r_a,p,l}$:
\begin{alignat}2
r_a(x,y)&>0&&\text{by \eqref{r>0} and the choice of $l$,
and}\label{r(x,y)>0}\\
g(x,y)&=a(x,y)-r_a(x,y)\quad&&\text{by \eqref{g}}\notag\\
                    &<a(x,y)&&\text{by \eqref{r(x,y)>0}.}
\label{1}
\end{alignat}
Then
\begin{equation*}
\sup\{a,g\}=
\left\lbrace
\begin{alignedat}3
&a^+&\quad&\text{if }0<y\le\xi_{a,p,j}&\ &\text{ by \eqref{g}, and}\\
&a  &     &\text{if }y\ge\xi_{a,p,j}  &&
\text{ by \eqref{g}, \eqref{1}, \eqref{Rolle},
and \eqref{a>0}.}
\end{alignedat}
\right.
\end{equation*}
Therefore, we may take $c_{a,p,j}=
\inf\{c_{a,p,j-1}^+,\,\sup\{a,g\}\}$, by \eqref{a<0}.
\end{proof}

\begin{proposition}
\label{one variable}
Let $h$, $\mathcal A$, and $\mathcal B$ be as before Lemma~\ref{K-1 terms},
and let $L$ and $H_p$ be as in Lemma~\ref{projection}, for some fixed
$p\in\{0,1,\ldots,L\}$. Then there is a function $d_p:\mathbb
R_{++}^2\to\mathbb R$ that $(1)$~is a supremum of infima of
finitely many generalized polynomial functions $\in\mathbb
R[\mathbb R^2]$ and $(2)$~coincides
with $h(x,y)$ on $H_p$.
\end{proposition}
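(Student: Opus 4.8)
The plan is to use the cell decomposition of Corollary~\ref{corollary} to reduce to the one-dimensional situation already settled by Lemma~\ref{Mahe's lemma}, and then to telescope across the cells. Fix $p\in\{0,1,\dots,L\}$. By Corollary~\ref{corollary} the half-strip $H_p$ is, up to the dense open subset $\bigcup_{k}D_{p,k}$, the disjoint union of the stacked cells $D_{p,0},\dots,D_{p,s(p)}$, and for $0\le k\le s(p)-1$ the common boundary of $D_{p,k}$ and $D_{p,k+1}$ inside $H_p$ is the graph of $y=\xi^{p,k+1}(x)$. First I would check that on each $D_{p,k}$ the $l$ values $g_1(x,y),\dots,g_l(x,y)$ are pairwise distinct: for $i\ne j$ the difference $g_i-g_j$ (or its negative) lies in $\mathcal A\subseteq\mathcal B$, so by Lemma~\ref{projection} (and the relabeling of Corollary~\ref{corollary}) all of its zeros in $H_p$ lie on the curves $\xi^{p,1},\dots,\xi^{p,s(p)}$, which bound the cells but miss their open interiors. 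Since each $D_{p,k}$ is connected (Corollary~\ref{corollary}) and $h$ is continuous, Lemma~\ref{union}(5) then yields a unique index $i(k)\in\{1,\dots,l\}$ with $D_{p,k}\subseteq A_{i(k)}^{\circ}$, so that $h=g_{i(k)}$ throughout $D_{p,k}$.

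The next step is to manufacture, for each $k$ with $0\le k\le s(p)-1$, a sup of infs of generalized polynomials that is a one-sided ``step'' across $\xi^{p,k+1}$. Set $\delta_k:=g_{i(k+1)}-g_{i(k)}$. If $i(k)=i(k+1)$ put $\tilde c_k:=0$. Otherwise one of $\pm\delta_k$ lies in $\mathcal A\subseteq\mathcal B$, and the continuity of $h$ forces $\delta_k\equiv0$ along the graph of $\xi^{p,k+1}$: each of its points is a limit, from below, of points of $D_{p,k}$ and, from above, of points of $D_{p,k+1}$, on which $h$ agrees with $g_{i(k)}$ and with $g_{i(k+1)}$ respectively. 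Hence that graph lies in the zero set of $\delta_k$ in $H_p$; being a continuous function on the connected interval $(\gamma_p,\gamma_{p+1})$ that lands at each $x$ on one of the pairwise disjoint graphs of the strictly ordered functions $\xi_{\delta_k,p,1}<\dots<\xi_{\delta_k,p,\,s(\delta_k,p)}$ of Lemma~\ref{projection}, it must coincide identically with one of them, say $\xi_{\delta_k,p,j}$. Applying Lemma~\ref{Mahe's lemma} to whichever of $\pm\delta_k$ lies in $\mathcal B$ (and negating the result when we used $-\delta_k$, which preserves being a sup of infs of generalized polynomials, by Proposition~\ref{closed}) produces a function $\tilde c_k$ that is a sup of infs of finitely many generalized polynomials and satisfies, for $x\in(\gamma_p,\gamma_{p+1})$ and $y\in\mathbb R_{++}$, $\tilde c_k(x,y)=\delta_k(x,y)$ when $y>\xi^{p,k+1}(x)$ and $\tilde c_k(x,y)=0$ otherwise.

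Finally I would set $d_p:=g_{i(0)}+\sum_{k=0}^{s(p)-1}\tilde c_k$ (the empty sum, when $s(p)=0$, being $0$). By Proposition~\ref{closed} this finite sum of sups of infs of generalized polynomials is again a sup of infs of generalized polynomials, giving~$(1)$. For~$(2)$, evaluate at a point of the cell $D_{p,m}$, where $\xi^{p,m}(x)<y<\xi^{p,m+1}(x)$: then $\tilde c_k=\delta_k$ for $k\le m-1$ and $\tilde c_k=0$ for $k\ge m$, so the sum telescopes and $d_p=g_{i(0)}+\sum_{k=0}^{m-1}\bigl(g_{i(k+1)}-g_{i(k)}\bigr)=g_{i(m)}=h$ there. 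Thus $d_p$ and $h$ agree on $\bigcup_k D_{p,k}$, which is dense in $H_p$; since $d_p$ is continuous (being a sup of infs of finitely many continuous functions) and $h$ is continuous, $d_p=h$ on all of $H_p$. The one delicate point --- and the only place the continuity of $h$ is used in an essential way --- is the identification of each transition curve $\xi^{p,k+1}$ with a genuine zero curve $\xi_{\delta_k,p,j}$ of $\delta_k$, which is precisely what licenses the application of Lemma~\ref{Mahe's lemma} to truncate at exactly that curve; everything else is the telescoping identity, the closure properties of Proposition~\ref{closed}, and a density-plus-continuity argument.
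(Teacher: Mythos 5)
Your proposal is correct and is essentially the paper's own argument: the paper likewise uses Lemma~\ref{union}(5) to pick the unique $\mu(p,k)$ with $D_{p,k}\subseteq A_{\mu(p,k)}$, notes via continuity of $h$ that each consecutive difference $g_{\mu(p,k+1)}-g_{\mu(p,k)}$ vanishes on the separating curve (hence lies, up to sign, in $\mathcal A\subset\mathcal B$ with that curve among its $\xi$'s), truncates it there with Lemma~\ref{Mahe's lemma}, and takes the same telescoping sum $d_p=g_{\mu(p,0)}+\sum_k c(v_{p,k},p,j(p,k))$. Your extra details (the sign adjustment via Proposition~\ref{closed}, the identification of $\xi^{p,k+1}$ with a zero curve of the difference, and the density-plus-continuity extension from $\bigcup_k D_{p,k}$ to all of $H_p$) merely spell out steps the paper leaves implicit.
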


\begin{proof}
Let $\gamma_p$ and $\gamma_{p+1}$ be as in Lemma~\ref{projection},
and let $s(p)$, $\xi^{p,0},\ldots,\xi^{p,s(p)+1}$,
and $D_{p,0},\ldots,D_{p,s(p)}$ be as in Corollary~\ref{corollary}.

For each $k=0,1,\ldots,s(p)$ there exists a unique
$\mu:=\mu(p,k)\in\{1,2,\ldots,l\}$ such that
$D_{p,k}\subseteq A_\mu$ (hence $h=g_\mu$ on $D_{p,k}$,
by \eqref{h}), using Lemma~\ref{union}(5) and the fact
that each $g_i-g_j$ is nonzero throughout $D_{p,k}$.

If $s(p)=0$, we may define the required $d_p$ to be
$g_{\mu(p,0)}\in\mathbb R[\mathbb R^2]$. If $s(p)>0$, then we
shall define $d_p$ as follows. For $k=0,1,\ldots,s(p)-1$, let
$v_{p,k}:=g_{\mu(p,k+1)}-g_{\mu(p,k)}$. We have $v_{p,k}=0$ on
$\overline{D_{p,k}}\cap\overline{D_{p,k+1}}$, since $h$ is
continuous. We extend the notation $c_{a,p,j}$ of
Lemma~\ref{Mahe's lemma} from the case where $a\in\mathcal B$ to the
case where $a=0$: for $j=0,1,\ldots$, we define the function
$c_{0,p,j}$ by $c_{0,p,j}(x,y)=0$ $\forall(x,y)\in\mathbb R_{++}^2$.
If $v_{p,k}\ne0$, then $v_{p,k}\in{\mathcal A}\subset\mathcal B$, so by
\eqref{projection} and \eqref{corollary} there exists
a unique $j(p,k)\in\{1,2,\ldots,s(v_k,p)\}$ such that
the graph of $y=\xi_{v_k,p,j}(x)$ over $(\gamma_p,\gamma_{p+1})$
separates $D_{p,k}$ from $D_{p,k+1}$. We may now take
\begin{equation*}
d_p=g_{\mu(p,0)}+\sum_{k=0}^{s(p)-1}c(v_{p,k},p,j(p,k)),
\end{equation*}
by \eqref{Mahe's lemma} and \eqref{closed}.
\end{proof}

\begin{remark}
The above proposition proves the one-variable analog of
Theorem~\ref{PBCSIG}. For if the given function $h$ does not
involve one of the two variables (say, $x$), then by
Remark~\ref{no x involved} above, none of the functions
that we constructed in the sets $\mathcal A$ and $\mathcal B$
will involve $x$, either, whence we would be able to take
$L=0$ (which would mean that $H_0$ equals all of $\mathbb R_{++}^2$)
in \eqref{projection}--\eqref{corollary}, \eqref{Mahe's lemma},
and \eqref{one variable} above.
\end{remark}

\section{Conclusion of the proof of Theorem~\ref{PBCSIG}}
\label{proof}

Recall, after \eqref{h} we defined ${\mathcal A}=
\{\,g_i-g_j\mid i<j\,\}$, and we defined $\mathcal B$ to be the
set obtained from $\mathcal A$ by closing under the
operations~\eqref{closed under partial} and \eqref{r} with
respect to $y$. We got an $L\ge0$ and certain
$\gamma_p$ on the $x$-axis such that
$0=\gamma_0<\gamma_1<\cdots<\gamma_L<\gamma_{L+1}=\infty$,
and for each $p\in\{0,1,,\ldots,L\}$ we got \eqref{one variable} a
function $d_p(x,y):\mathbb R_{++}^2\to\mathbb R$ that
(1)~is a supremum of infima of finitely many generalized polynomial
functions and
(2)~agrees with $h$ on $H_p$ ($=(\gamma_p,\gamma_{p+1})\times
\mathbb R_{++}$).

Now let $\mathcal C$ be the subset of $\mathbb R[\mathbb R^2]$
obtained from ${\mathcal B}\cup\{\,x-\gamma_p\mid1\le p\le L\,\}$
by closing under the ``$x$-analogs''
of the operations \eqref{closed under partial} and \eqref{r};
i.e., interchanging $x$ and $y$ in \eqref{K}, \eqref{closed
under partial}, and \eqref{r}. Then we immediately obtain,
first, the following $x$-analog of Lemma~\ref{projection}
and its Corollary \ref{corollary}:

\begin{lemma}
There exist $M\in\mathbb N$ and
$\eta_1<\eta_2<\cdots<\eta_M\in\mathbb R_{++}$ such that,
writing $\eta_0=0$ and $\eta_{M+1}=\infty$,
and fixing any $q\in\{0,1,\ldots,M\}$,
the zeros, in the $q$th horizontal half-strip
$I_q:=\mathbb R_{++}\times(\eta_q,\eta_{q+1})$,
of all the $a\in\mathcal C$, are the graphs of continuous,
monotonic,$^{\ref{monotonic}}$
generalized semialgebraic functions $x=\zeta^{q,k}(y)$,
$k=1,2,\ldots,t(q)$ $($for a suitable $t(q)\in\mathbb N)$.
Moreover, for each $y\in(\eta_q,\eta_{q+1})$,
\begin{equation}
0=:\zeta^{q,0}(y)<\zeta^{q,1}(y)<\cdots<\zeta^{q,t(q)}(y)<
\zeta^{q,t(q)+1}(y):=\infty.
\label{x-increasing order}
\end{equation}
Consequently, the sets
\begin{equation*}
E_{q,k}:=\{\,(x,y)\mid
\eta_q<y<\eta_{q+1},\>\zeta^{q,k}(y)<x<\zeta^{q,k+1}(y)\,\},
\end{equation*}
for $k\in\{0,1,\ldots,t(q)\}$, are nonempty, pairwise-disjoint,
generalized semialgebraic cells
$($in particular, they are open and $($pathwise$)$ connected$)$,
and their union is a dense open subset of $I_q$. Moreover,
the $E_{q,k}$ are ``stacked'' one to the right of the other
in the $x$-direction,
so that for any $y\in(\eta_q,\eta_{q+1})$ and for any
$(t(q)+1)$-tuple $x_0,x_1,\ldots,x_{t(q)}\in\mathbb R_{++}$ for which
each $(x_k,y)\in E_{q,k}$, $x_0<x_1<\cdots<x_{t(q)}$.
Finally, for each $k$, there is a $p\in\{0,1,\ldots,L\}$
such that $E_{q,k}\subseteq H_p$ $($since the functions
$x-\gamma_1,\ldots,x-\gamma_L$ belong to $\mathcal C)$.\qed
\label{x-lemma}
\end{lemma}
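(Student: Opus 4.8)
The plan is to transpose the proof of Lemma~\ref{projection} together with Corollary~\ref{corollary}, interchanging the roles of $x$ and $y$, and then to deduce the final sentence from the presence of the linear generalized polynomials $x-\gamma_1,\ldots,x-\gamma_L$ in $\mathcal C$.

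First I would note that $\mathcal C$ is finite. Since $\mathcal B$ is finite by Lemma~\ref{K-1 terms}, the set $\mathcal B\cup\{\,x-\gamma_p\mid1\le p\le L\,\}$ is finite; and the $x$-analog of Lemma~\ref{K-1 terms}, obtained by interchanging $x$ and $y$ in \eqref{K}, \eqref{closed under partial}, and \eqref{r}, shows that each application of the $x$-versions of those two operations to an element with $K'>1$ $x$-terms yields an element with exactly $K'-1$ $x$-terms. Hence the closure of $\mathcal B\cup\{\,x-\gamma_p\,\}$ under those operations terminates, so $\mathcal C$ is finite. I would then run the o-minimality argument from the proof of Lemma~\ref{projection} verbatim, but applied to $\mathcal C$ with the two coordinates transposed: since the $a\in\mathcal C$ are definable in Miller's polynomially bounded, o-minimal structure $\mathbb R_{\text{an}}^{\mathbb R}$, this produces $M\in\mathbb N$ and $\eta_1<\cdots<\eta_M$ in $\mathbb R_{++}$ so that, with $\eta_0=0$ and $\eta_{M+1}=\infty$, for every $q\le M$ and every $a\in\mathcal C$ the zeros of $a$ in $I_q=\mathbb R_{++}\times(\eta_q,\eta_{q+1})$ are the graphs of finitely many continuous, monotonic, generalized semialgebraic functions $x=\zeta_{a,q,j}(y)$ on $(\eta_q,\eta_{q+1})$, obeying the transposed form of the uniform trichotomy \eqref{uniform trichotomy}.

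Next I would repeat the bookkeeping of Corollary~\ref{corollary}: permuting and relabelling $\{\,\zeta_{a,q,j}\mid a\in\mathcal C\,\}$ gives functions $\zeta^{q,1}<\cdots<\zeta^{q,t(q)}$ on $(\eta_q,\eta_{q+1})$ satisfying \eqref{x-increasing order}, and the sets $E_{q,k}$ are then seen to be nonempty, pairwise-disjoint, open, pathwise-connected generalized semialgebraic cells with dense union in $I_q$, stacked in the $x$-direction, by exactly the arguments used there. For the last clause I would use that each $x-\gamma_p$ $(1\le p\le L)$ lies in $\mathcal C$: its zero set in $I_q$ is the vertical segment $\{\,(x,y)\in I_q\mid x=\gamma_p\,\}$, i.e., the graph of the constant function $y\mapsto\gamma_p$, so $\gamma_p$ occurs among the values $\zeta^{q,j}(y)$ for every $y\in(\eta_q,\eta_{q+1})$. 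Since $E_{q,k}$ consists of those $(x,y)$ with $y\in(\eta_q,\eta_{q+1})$ and $\zeta^{q,k}(y)<x<\zeta^{q,k+1}(y)$, no point of $E_{q,k}$ has $x=\gamma_p$ for any $p\le L$; as $E_{q,k}$ is connected, its image under the projection $(x,y)\mapsto x$ is a subinterval of $\mathbb R_{++}$ avoiding $\gamma_1,\ldots,\gamma_L$, hence contained in some $(\gamma_p,\gamma_{p+1})$ with $p\in\{0,1,\ldots,L\}$, so $E_{q,k}\subseteq(\gamma_p,\gamma_{p+1})\times\mathbb R_{++}=H_p$.

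I do not expect a real obstacle, since the bulk of the statement is Lemma~\ref{projection} and Corollary~\ref{corollary} read with $x$ and $y$ swapped; the only steps needing genuine (if routine) care are verifying that $\mathcal C$ is finite and invoking connectedness of the cells $E_{q,k}$ in the last step to confine each one to a single strip $H_p$.
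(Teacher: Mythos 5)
Your proposal is correct and matches the paper's approach: the paper gives this lemma no proof at all (it is stated with a \qed as the ``immediate'' $x$--$y$ transposition of Lemma~\ref{projection} and Corollary~\ref{corollary}, with the last clause justified only by the remark that $x-\gamma_1,\ldots,x-\gamma_L\in\mathcal C$). You simply spell out the same routine details — finiteness of $\mathcal C$ via the $x$-analog of Lemma~\ref{K-1 terms}, the transposed o-minimality/cell argument, and the connectedness argument confining each $E_{q,k}$ to a single $H_p$ — all of which are sound.
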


The second immediate consequence of our choice of $\mathcal C$
is the following $x$-analog of Proposition~\ref{one variable}:

\begin{proposition}
\label{the other variable}
Let $h$, $\mathcal A$, $\mathcal C$, $M$,
$\eta_0,\eta_1,\ldots,\eta_{M+1}$,
$q$, and $I_q$ be as above.
There is a function $e_q:\mathbb
R_{++}^2\to\mathbb R$ that $(1)$~is a supremum of infima of
finitely many generalized polynomial functions $\in\mathbb
R[\mathbb R^2]$ and $(2)$~coincides
with $h(x,y)$ on $I_q$.\qed
\end{proposition}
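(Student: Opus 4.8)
The plan is to obtain Proposition~\ref{the other variable} by repeating the argument of \S4 verbatim with the roles of $x$ and $y$ interchanged. Everything proved in \S4 about the variable $y$ --- Lemma~\ref{K-1 terms}, the Generalized Mah\'e Lemma~\ref{Mahe's lemma}, and Proposition~\ref{one variable} --- used only the operations \eqref{closed under partial} and \eqref{r} (differentiation and ``$r$-remainder'' in $y$), Rolle's theorem, the o-minimality input behind Lemma~\ref{projection}, and the coordinate-free Lemma~\ref{union}. None of these distinguishes $x$ from $y$; so replacing $\mathcal B$ by $\mathcal C$ (which by construction is closed under the $x$-analogs of \eqref{closed under partial} and \eqref{r}) and $\gamma_p,H_p,\xi,D_{p,k}$ by $\eta_q,I_q,\zeta,E_{q,k}$, the same proofs apply.

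First I would note that $\mathcal C$ is finite: it is the closure of the finite set ${\mathcal B}\cup\{\,x-\gamma_p\mid1\le p\le L\,\}$ under the $x$-analogs of \eqref{closed under partial} and \eqref{r}, and by the $x$-analog of Lemma~\ref{K-1 terms} each such operation strictly decreases the number of $x$-terms, so the process terminates; in particular the number $M$ in Lemma~\ref{x-lemma} is finite. Next I would state and prove the $x$-analog of Lemma~\ref{Mahe's lemma}: for each $q\le M$, each $a\in\mathcal C$, and each admissible index $k$, there is a supremum of infima of finitely many generalized polynomials, call it $c^x_{a,q,k}(x,y)$, equal to $a(x,y)$ where $x>\zeta_{a,q,k}(y)$ and equal to $0$ otherwise. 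Its proof is the proof of Lemma~\ref{Mahe's lemma} transcribed with $x$ and $y$ swapped, using the closure properties of $\mathcal C$, Rolle's theorem, and Proposition~\ref{closed}.

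Finally I would construct $e_q$ exactly as $d_p$ was constructed in Proposition~\ref{one variable}. Using Lemma~\ref{x-lemma}, decompose $I_q$ into the stacked cells $E_{q,0},\ldots,E_{q,t(q)}$. Since ${\mathcal A}\subseteq{\mathcal B}\subseteq{\mathcal C}$, each difference $g_i-g_j$ is nonzero throughout each $E_{q,k}$, so Lemma~\ref{union}(5) yields a unique $\nu(q,k)$ with $E_{q,k}\subseteq A_{\nu(q,k)}$, hence $h=g_{\nu(q,k)}$ on $E_{q,k}$. Continuity of $h$ makes $w_{q,k}:=g_{\nu(q,k+1)}-g_{\nu(q,k)}$ vanish on $\overline{E_{q,k}}\cap\overline{E_{q,k+1}}$; since $w_{q,k}\in{\mathcal A}\subseteq{\mathcal C}$, Lemma~\ref{x-lemma} supplies (when $w_{q,k}\ne0$) the unique index $j(q,k)$ whose graph $x=\zeta_{w_{q,k},q,j}(y)$ separates $E_{q,k}$ from $E_{q,k+1}$, and we extend the notation by setting $c^x_{0,q,j}\equiv0$ when $w_{q,k}=0$. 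Then
\[
e_q:=g_{\nu(q,0)}+\sum_{k=0}^{t(q)-1}c^x_{w_{q,k},q,j(q,k)}
\]
is a supremum of infima of finitely many generalized polynomials by the $x$-analog of Lemma~\ref{Mahe's lemma} and Proposition~\ref{closed}, and the sum telescopes to $g_{\nu(q,k)}=h$ on each $E_{q,k}$; as $\bigcup_k E_{q,k}$ is dense in $I_q$ and $e_q,h$ are continuous, $e_q=h$ on all of $I_q$.

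I do not expect a genuine obstacle here: the real content is the observation that \S4 exploited no true asymmetry between $x$ and $y$. The two points needing a little care are (i) that adjoining the linear functions $x-\gamma_p$ to $\mathcal B$ before taking the $x$-closure preserves finiteness --- this is the $x$-analog of Lemma~\ref{K-1 terms} above, and is what later forces each $E_{q,k}$ into a single $H_p$ (the last clause of Lemma~\ref{x-lemma}) --- and (ii) that the cells $E_{q,k}$ really are separated by zero-sets of members of $\mathcal C$ and that $h$ restricts to a single $g_i$ on each, which is precisely the role of ${\mathcal A}\subseteq{\mathcal C}$ together with Lemma~\ref{union}(5) and the continuity of $h$.
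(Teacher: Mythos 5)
Your proposal is correct and is essentially the paper's own argument: the paper states Proposition~\ref{the other variable} as an immediate $x$-analog of Proposition~\ref{one variable}, made possible precisely because $\mathcal C$ is closed under the $x$-analogs of \eqref{closed under partial} and \eqref{r}, and you have simply written out that transcription (finiteness of $\mathcal C$, the $x$-analog of Lemma~\ref{Mahe's lemma}, and the telescoping sum over the cells $E_{q,k}$) in full.
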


Let
\begin{equation*}
Q=\{\,(q,k)\mid q\in\{0,1,\ldots,M\},\
k\in\{0,1,\ldots,t(q)\}\,\},
\end{equation*}
where $M$ and $t(q)$ are as in \eqref{x-lemma}.
Then
\begin{equation}
\bigcup_{(q,k)\in Q}E_{q,k}\text{
is a dense open subset of }\mathbb R_{++}^2,
\label{dense}
\end{equation}
by \eqref{x-lemma}.

\begin{lemma}
\label{finer partition}
There is a function $\nu:Q\to\{1,\ldots,l\}$ such that
$\forall(q,k)\in Q$, $E_{q,k}\subseteq A_{\nu(q,k)}^\circ$
$($in particular, $h=g_{\nu(q,k)}$ on $E_{q,k})$.
\end{lemma}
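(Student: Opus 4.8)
The plan is to run, for each cell $E_{q,k}$, exactly the argument used for the cells $D_{p,k}$ in the proof of Proposition~\ref{one variable}, namely a single application of Lemma~\ref{union}(5). First I would recall, from Lemma~\ref{x-lemma}, that each $E_{q,k}$ ($(q,k)\in Q$) is a nonempty, open, pathwise-connected subset of the horizontal half-strip $I_q$; for the present purpose only connectedness and nonemptiness of $E_{q,k}$ will be used (nonemptiness to get uniqueness of $\nu(q,k)$).

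The key point is that at every point of $E_{q,k}$ the $l$ values $g_1(x,y),\ldots,g_l(x,y)$ are pairwise distinct. Indeed, for $i<j$ the difference $g_i-g_j$ lies in $\mathcal A$, hence in $\mathcal B$, hence in $\mathcal C$; and by construction (Lemma~\ref{x-lemma}) the sets $E_{q,k}$ are obtained from $I_q$ by deleting the zero curves $x=\zeta^{q,k}(y)$ of all the elements of $\mathcal C$. Consequently $E_{q,k}$ is disjoint from the zero set of $g_i-g_j$ for every $i<j$, i.e. $g_i\ne g_j$ throughout $E_{q,k}$.

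With the continuity of $h$ and these two facts in hand, Lemma~\ref{union}(5) applied with $E=E_{q,k}$ produces a unique index $i\in\{1,\ldots,l\}$ with $E_{q,k}\subseteq A_i^\circ$; I set $\nu(q,k):=i$. Since $A_i^\circ\subseteq A_i$ and $h=g_i$ on $A_i$ by \eqref{h}, this gives $h=g_{\nu(q,k)}$ on $E_{q,k}$, as asserted, and letting $(q,k)$ range over $Q$ defines the required function $\nu:Q\to\{1,\ldots,l\}$.

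I do not anticipate any real obstacle here: essentially all the work has already been done in Lemma~\ref{union}(5) and in the construction of $\mathcal C$ and of the cells $E_{q,k}$. The only thing that genuinely needs to be pointed out is the chain of inclusions $\mathcal A\subseteq\mathcal B\subseteq\mathcal C$ together with the observation that the zero sets of the differences $g_i-g_j$ are therefore among the curves removed in forming the $E_{q,k}$; after that, the verification is a verbatim repetition of the $D_{p,k}$ case in Proposition~\ref{one variable}.
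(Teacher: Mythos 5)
Your proposal is correct and is essentially the paper's own argument: the paper's proof is a one-line citation of Lemma~\ref{union}(5) together with Lemma~\ref{x-lemma}, and your write-up merely makes explicit the details (that $g_i-g_j\in\mathcal A\subseteq\mathcal B\subseteq\mathcal C$, so each connected cell $E_{q,k}$ avoids the zero set of every such difference, allowing Lemma~\ref{union}(5) to be applied exactly as in Proposition~\ref{one variable}). No gap.
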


\begin{proof}
This follows from Lemma~\ref{union}(5) and Lemma~\ref{x-lemma}.
\end{proof}

\begin{remark}[on Definition~\ref{pgp}]
\label{aside}
We can now substantiate the statement in Remark~\ref
{announcement of Remark "aside"} above, viz.,
that in the definition of ``piecewise generalized polynomial function''
\eqref{pgp}, it was not necessary to require each $A_i$ to be a
generalized semialgebraic set in the case where $h$ is continuous,
since in that case we may (by \eqref{finer partition} and
\eqref{union}(3)) take each $A_i$ to be the closure of the
union of certain $E_{q,k}$, which is automatically generalized
semialgebraic.
\end{remark}

\begin{notation}
\label{Delta}
For $a,b\in\mathbb R\cup\{\pm\infty\}$ with $a<b$, let
\begin{equation*}
\Delta(a,b)=\{\,(x,y)\in\mathbb R^2\mid xy>0\ \&\ a<x+y<b\,\}.
\end{equation*}
(See Figure~\ref{DeltaFigure}.)
\end{notation}

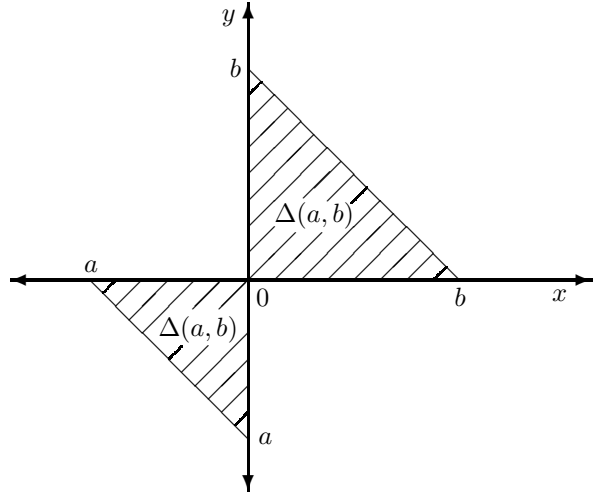
\begin{figure}[htb]
\setlength\parindent{0pt}
\begin{picture}(327,180)(-143.5,-0)
{\thicklines
\put(-90, 80){\vector( 1,0){220}}
\put(130, 80){\vector(-1,0){220}}
\put(   0,-0){\vector(0, 1){185}}
\put(   0,185){\vector(0,-1){185}}
}
\put(3,70)0
\put(-10,178){$y$}
\put(115,72){$x$}
\put(-60,80){\line(1,-1){60}}
\put(10,102){$\Delta(a,b)$}
\put(0,160){\line(1,-1){80}}
\put(-34, 59){$\Delta(a,b)$}
\put( 78, 70){$b$}
\put(-62, 83){$a$}
\put( -7,157){$b$}
\put(  4, 18){$a$}
\qbezier(0,150,)(0,150)(5,155)
\put( 0,140){\line(1,1){10}}
\put( 0,130){\line(1,1){15}}
\put( 0,120){\line(1,1){20}}
\put( 0,110){\line(1,1){25}}
\put( 0,100){\line(1,1){30}}
\put( 0, 90){\line(1,1){10}}
\put(21,111){\line(1,1){14}}
\put( 0, 80){\line(1,1){18}}
\put(30,110){\line(1,1){10}}
\put(10, 80){\line(1,1){18}}
\qbezier(39,109)(39,109)(45,115)
\put(20, 80){\line(1,1){30}}
\put(30, 80){\line(1,1){25}}
\put(40, 80){\line(1,1){20}}
\put(50, 80){\line(1,1){15}}
\put(60, 80){\line(1,1){10}}
\qbezier(70,80)(70,80)(75,85)
\put(  0,80){\line(-1,-1){11}}
\qbezier(-25,55)(-25,55)(-30,50)
\put(-10,80){\line(-1,-1){11}}
\put(-20,80){\line(-1,-1){20}}
\put(-30,80){\line(-1,-1){15}}
\put(-40,80){\line(-1,-1){10}}
\qbezier(-50,80)(-50,80)(-55,75)
\put(-15,55){\line(-1,-1){10}}
\put(  0,60){\line(-1,-1){20}}
\put(  0,50){\line(-1,-1){15}}
\put(  0,40){\line(-1,-1){10}}
\qbezier(0,30)(0,30)(-5,25)
\end{picture}
\caption{The ``double-triangular'' region $\Delta(a,b)$
\eqref{Delta}. In this figure, $a<0<b$.}
\label{DeltaFigure}
\end{figure}

\begin{lemma}
\label{analytic}
Let $f(x,y)$ be a real-valued function that is analytic on a
neighborhood of $(0,0)$ in $\mathbb R^2$.
Write $f_x$ and $f_y$ for $\partial f/\partial x$ and
$\partial f/\partial y$, respectively.
Suppose $f(0,0)=0$, $f_x(0,0)>0$, and $f_y(0,0)>0$.
Then there is an $\epsilon>0$ such that for all
$(x,y)\in\Delta(0,\epsilon)$, $f(x,y)>0$.
\end{lemma}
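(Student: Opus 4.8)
The plan is to reduce everything to the first-order Taylor expansion of $f$ at the origin. First I would observe that $\Delta(0,\epsilon)$ is simply the open triangle $\{(x,y)\mid x>0,\ y>0,\ x+y<\epsilon\}$: the condition $xy>0$ forces $x$ and $y$ to have the same sign, and if both were negative then $x+y<0$, contradicting $0<x+y$. In particular every $(x,y)\in\Delta(0,\epsilon)$ satisfies $x+y>0$ and lies in the open first quadrant, so that $|x|+|y|=x+y$ there.

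Next, since $f$ is analytic — hence $C^2$ — on a neighborhood of $(0,0)$, I would apply Taylor's theorem with second-order remainder: there exist $\delta>0$ and $C>0$ such that for all $x,y$ with $|x|,|y|<\delta$,
\[
f(x,y)=f(0,0)+f_x(0,0)\,x+f_y(0,0)\,y+R(x,y),\qquad |R(x,y)|\le C\,(|x|+|y|)^2,
\]
the bound on $R$ coming from the boundedness of the second partials of $f$ near the origin. Using $f(0,0)=0$ and setting $\lambda:=\min\{f_x(0,0),f_y(0,0)\}>0$, for $(x,y)$ in the first quadrant with $x+y<\delta$ (so that automatically $|x|,|y|<\delta$) this gives
\[
f(x,y)\ \ge\ f_x(0,0)\,x+f_y(0,0)\,y-C\,(x+y)^2\ \ge\ \lambda\,(x+y)-C\,(x+y)^2\ =\ (x+y)\bigl(\lambda-C\,(x+y)\bigr).
\]

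Finally I would take $\epsilon:=\min\{\delta,\ \lambda/(C+1)\}>0$. For $(x,y)\in\Delta(0,\epsilon)$ we then have $0<x+y<\epsilon<\lambda/C$, hence $\lambda-C(x+y)>0$, and since also $x+y>0$ the displayed inequality yields $f(x,y)>0$, as required. I do not expect any genuine obstacle here: the only points needing care are the degeneracy of $\Delta(0,\epsilon)$ to a single triangle (so that the two-triangle picture of $\Delta(a,b)$ collapses) and the elementary remainder estimate, both routine. (If one preferred to avoid Taylor's theorem, the same conclusion follows by noting that along each ray $t\mapsto(tu,tv)$ with $u,v\ge0$ and $u^2+v^2=1$ the derivative at $t=0$ equals $f_x(0,0)u+f_y(0,0)v\ge\lambda>0$, and then invoking compactness of the closed quarter-circle of such directions to choose $\epsilon$ uniformly; but the Taylor argument is shorter.)
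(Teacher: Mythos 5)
Your proof is correct, but it takes a genuinely different (and more elementary) route than the paper. You reduce everything to the first-order Taylor expansion: since $\Delta(0,\epsilon)$ degenerates to the open first-quadrant triangle $\{x>0,\ y>0,\ x+y<\epsilon\}$ (as you rightly note), the linear part $f_x(0,0)x+f_y(0,0)y\ge\lambda(x+y)$ with $\lambda=\min\{f_x(0,0),f_y(0,0)\}>0$ dominates the second-order remainder $O((x+y)^2)$ for $x+y$ small, and the stated choice $\epsilon=\min\{\delta,\lambda/(C+1)\}$ closes the argument; all the estimates ($|x|,|y|<\delta$ when $x+y<\delta$ in the first quadrant, $|x|+|y|=x+y$ there, $\lambda-C(x+y)>0$) check out. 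The paper instead invokes the Weierstrass Preparation Theorem and Puiseux series to describe the germ of the zero set of $f$ as finitely many analytic half-branches, uses the chain rule to show that no branch can enter the first quadrant near the origin (a branch with both derivative components nonnegative would make the chain-rule expression strictly positive instead of zero), concludes $f$ is nonvanishing on $\Delta(0,\epsilon)$ for small $\epsilon$, and then fixes the sign by connectedness together with the positive derivative of $t\mapsto f(t,t)$ at $0$. What your approach buys is economy and generality: it needs only that $f$ be differentiable with a suitable remainder bound (say $C^2$, or even $C^1$ with the little-$o$ form of differentiability), not analyticity, and it avoids the zero-set machinery entirely; what the paper's approach buys is a picture of the local zero set that would also yield sign information on regions not touching the gradient direction, which is more than the lemma requires. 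Either argument suffices for the use made of the lemma in Case 2 of the proof of the main theorem.
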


\begin{proof}
By the Weierstrass Preparation Theorem and the theory of
Puiseux series (see, e.g., \cite[Propositions~3.3 and
4.4, respectively]{Ruiz 1993}),
the germ at $(0,0)$ of the zero-set of $f$ consists
of finitely many curve germs $(\alpha_1(t),\beta_1(t))$,
$(\alpha_2(t),\beta_2(t))$, \dots, where for each $i$:
$\alpha_i$ and $\beta_i$ are analytic for $0\le t<\delta$
(some $\delta>0$); $\alpha_i(0)=\beta_i(0)=0$; and
\begin{align}
\label{t^m}
\begin{split}
\text{either }
\alpha_i(t)&=t^{m_i}\text{ and }\beta'_i(0)\ne0,\\
\text{or }\beta_i(t)&=t^{m_i}\text{ and }\alpha'_i(0)\ne0,
\end{split}
\end{align}
for some $m_i\in\{1,2,\ldots\}$.  By the chain rule,
\begin{equation}
\label{chain}
0=\frac d{dt}\,0=\frac d{dt}\,f(\alpha_i(t),\beta_i(t))\bigr|_0
=f_x(0,0)\alpha'_i(0)+f_x(0,0)\beta'_i(0).
\end{equation}
Now we see that we cannot have both
$\alpha'_i(0)\ge0$ and $\beta'_i(0)\ge0$,
for this, together with \eqref{t^m} and the hypothesis of the lemma,
would make the right hand side of \eqref{chain} positive.
Thus there is an $\epsilon>0$ such that for all
$(x,y)\in\Delta(\epsilon)$, $f(x,y)\ne0$.
Since $\Delta(\epsilon)$ is connected and $f$ is
continuous and nonzero there, $f$ has constant sign
(positive or negative) throughout $\Delta(\epsilon)$.
This sign must, in fact, be positive, since
$\frac d{dt}f(t,t)\bigr|_0=f_x(0,0)+f_y(0,0)>0$ and $f(0,0)=0$.
\end{proof}

{\it Conclusion of the proof of Theorem~\ref{PBCSIG}\/}.
As in \cite{Mahe 1984}, the idea now is to construct, for each
two ordered pairs $(q,k)$ and $(r,m)\in Q$, a function
$u_{(q,k),(r,m)}$ that is the supremum of infima of finitely many
generalized polynomial functions, and is such that
\begin{equation}
\label{u}
u_{(q,k),(r,m)}
\begin{cases}
\le g_{\nu(q,k)}&\text{on }E_{q,k}\text{ and}\\
\ge g_{\nu(r,m)}&\text{on }E_{r,m}.
\end{cases}
\end{equation}
Then we shall be done, since the function
\begin{equation*}
u_{(r,m)}:=
\inf\bigl(\{g_{\nu(r,m)}\}\cup
\{\,u_{(q,k),(r,m)}\mid(q,k)\in Q\,\}\bigr)
\end{equation*}
will satisfy
\begin{alignat*}2
u_{(r,m)}&  = g_{\nu(r,m)}&&\text{ on }E_{r,m}\text{, and,}\\
\text{ for each }(q,k)\in Q,\quad
u_{(r,m)}&\le g_{\nu(q,k)}&&\text{ on }E_{q,k};
\end{alignat*}
then $h=\sup_{(r,m)\in Q}u_{(r,m)}$ throughout
$\bigcup_{(q,k)\in Q}E_{q,k}$, and hence (by \eqref{dense}
and the continuity of $h$) throughout $\mathbb R_{++}^2$,
as required.

So suppose $(q,k)$ and $(r,m)\in Q$,
and let us prepare to construct a $u_{(q,k),(r,m)}$
satisfying \eqref{u}. If $E_{\nu(q,k)}$ and
$E_{\nu(r,m)}$ are both subsets of the same horizontal
half-strip $I_q$ \eqref{x-lemma},\footnote{This will occur
if and only if $q=r$.} or of the same vertical half-strip $H_p$
(for some $p\in\{1,2,\ldots,L\}$, using the last sentence of
\eqref{x-lemma}), then we may take $u_{(q,k),(r,m)}$
to be either $e_q$ or $d_p$, respectively,
by \eqref{the other variable} or \eqref{one variable}.

The case that makes the proof for two variables harder
than the proof for one variable is the case when $E_{\nu(q,k)}$
and $E_{\nu(r,m)}$ do {\it not\/} lie in a common
half-strip (either horizontal or vertical). We may
assume, without loss of generality, that $E_{\nu(q,k)}$
is below and to the left of $E_{\nu(r,m)}$
(i.e., that points in $E_{\nu(q,k)}$ have $x$- and
$y$-coordinates less than the $x$- and $y$-coordinates
of points in $E_{\nu(r,m)}$, respectively);
the other three possibilities could be handled similarly.

$E_{\nu(q,k)}$ lies in the horizontal half-strip
$I_q:=\mathbb R_{++}\times(\eta_q,\eta_{q+1})$,
and in a unique vertical half-strip $H_p:=
(\xi_p,\xi_{p+1})\times\mathbb R_{++}$,
for some $p$. $E_{\nu(r,m)}$ lies in exactly
one of the horizontal half-strips $I_{q+1},I_{q+2},\dots$,
and in exactly one of the vertical half-strips $H_{p+1},
H_{p+2},\,$\dots. (See Figure~\ref{E},
where, for simplicity, $E_{\nu(r,m)}$ is shown lying in
$I_{q+1}$ and $H_{p+1}$.)
\begin{figure}[htb]
\setlength\parindent{0pt}
\begin{picture}(327,190)(-143.5,-10)
{\thicklines
\put(-110, 0){\vector(1,0){280}}
\put(-90,-15){\vector(0,1){200}}
}
\put(-98,-10)0
\put(-103,175){$y$}
\put(155,-12){$x$}
\put(-30,  0){\line(0,1){10}}
\put(-30, 20){\line(0,1){10}}
\put(-30, 40){\line(0,1){10}}
\put(-30, 60){\line(0,1){10}}
\put(-30, 80){\line(0,1){10}}
\put(-30,100){\line(0,1){10}}
\put(-30,120){\line(0,1){10}}
\put(-30,140){\line(0,1){10}}
\put(-30,160){\line(0,1){10}}
\put( 40,  0){\line(0,1){10}}
\put( 40, 20){\line(0,1){10}}
\put( 40, 40){\line(0,1){10}}
\put( 40, 60){\line(0,1){10}}
\put( 40, 80){\line(0,1){10}}
\put( 40,100){\line(0,1){10}}
\put( 40,120){\line(0,1){10}}
\put( 40,140){\line(0,1){10}}
\put( 40,160){\line(0,1){10}}
\put(-90,60){\line(1,0){10}}
\put(-70,60){\line(1,0){10}}
\put(-50,60){\line(1,0){10}}
\put(-30,60){\line(1,0){10}}
\put(-10,60){\line(1,0){10}}
\put( 10,60){\line(1,0){10}}
\put( 30,60){\line(1,0){10}}
\put( 50,60){\line(1,0){10}}
\put( 70,60){\line(1,0){10}}
\put( 90,60){\line(1,0){10}}
\put(110,60){\line(1,0){10}}
\put(130,60){\line(1,0){10}}
\put(150,60){\line(1,0){10}}
\put(-90,100){\line(1,0){10}}
\put(-70,100){\line(1,0){10}}
\put(-50,100){\line(1,0){10}}
\put(-30,100){\line(1,0){10}}
\put(-10,100){\line(1,0){10}}
\put( 10,100){\line(1,0){10}}
\put( 30,100){\line(1,0){10}}
\put( 50,100){\line(1,0){10}}
\put( 70,100){\line(1,0){10}}
\put( 90,100){\line(1,0){10}}
\put(110,100){\line(1,0){10}}
\put(130,100){\line(1,0){10}}
\put(150,100){\line(1,0){10}}
\put(40,100){\circle*3}
\qbezier(40,140)(70,110)(110,100)
\put(80,100){\circle*3}
\put(80,-3){\line(0,1)6}
\put(67,-10){$\xi_{p+1}+b^*$}
\put(40,140){\circle*3}
\put(-93,140){\line(1,0)6}
\put(-131,137){$\eta_{q+1}+b^*$}
\put( -8,49){$x=\zeta^{q,k}(y)$}
\put(-10,51){\line(-1,0){10}}
\put(-20,51){\vector(1,2){9.7}}
\qbezier(-28,100)(-25,87)(-20,80)
\qbezier(-20, 80)(-10,66)( 25,60)
\put(53,71){$\Delta(a^*,b^*)+(\xi_{p+1},\eta_{q+1})$}
\put(51,73){\line(-1,0){10}}
\put(41,73){\vector(-1,2){8.5}}
\put(41,73){\vector(1,4){8.5}}
\put(-88,112){$x=\zeta^{q,k+1}(y)$}
\put(-29,114){\line(1,0){11}}
\put(-18,114){\vector(1,-3){7.5}}
\qbezier(-24,100)(-10,87)(10,85)
\qbezier( 10, 85)( 30,83)(40,60)
\put(40,140){\line(1,-1){40}}
\qbezier(40,130)(40,130)(45,135)
\put(40,120){\line(1,1){10}}
\put(40,110){\line(1,1){15}}
\put(40,100){\line(1,1){20}}
\put(50,100){\line(1,1){15}}
\put(60,100){\line(1,1){10}}
\qbezier(70,100)(70,100)(75,105)
\put(6.1,100){\line(1,-1){33.9}}
\qbezier(20,100)(20,100)(13.05,93.05)
\put(30,100){\line(-1,-1){11.95}}
\put(40,100){\line(-1,-1){16.95}}
\qbezier(40,90)(40,90)(37,87)
\qbezier(33,83)(33,83)(28.05,78.05)
\qbezier(37,77)(37,77)(33.05,73.05)
\put(6.1,100){\circle*3}
\put(6.1,-3){\line(0,1)6}
\put(-11,-10){$\xi_{p+1}-a^*$}
\put(40,66.1){\circle*3}
\put(-93,66.1){\line(1,0)6}
\put(-132,64){$\eta_{q+1}-a^*$}
\put( -32,-10){$\xi_p$}
\put( 38,-10){$\xi_{p+1}$}
\put(-100, 56){$\eta_q$}
\put(-109, 97){$\eta_{q+1}$}
\put(5,15){$H_p$}
\put(-76,77){$I_q$}
\put(78,130){$E_{\nu(r,m)}$}
\put(-3,73){$E_{\nu(q,k)}$}
\end{picture}
\caption{The case where $E_{\nu(q,k)}$ and $E_{\nu(r,m)}$
do not lie in a common half-strip.
(In this illustration, $E_{\nu(r,m)}$ lies in $I_{q+1}$
and $H_{p+1}$).)}
\label{E}
\end{figure}
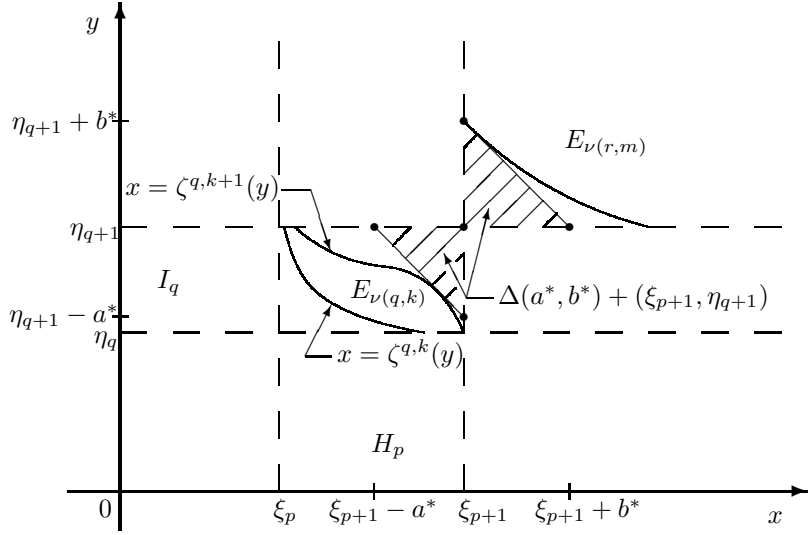

For any $a,b\in\mathbb R\cup\{\pm\infty\}$ with $a<b$,
write
\begin{equation*}
\Delta(a,b)+(\xi_{p+1},\eta_{q+1})=
\{\,(x+\xi_{p+1},\,y+\eta_{q+1})\mid(x,y)\in\Delta(a,b)\,\}.
\end{equation*}
Now let
\begin{align}
\label{a^*,b^*}
\begin{split}
a^*&=\min\{\,s\in\mathbb R\mid
(\Delta(s,0)+(\xi_{p+1},\eta_{q+1}))\cap E_{\nu(q,k)}
=\emptyset\,\}\text{ and}\\
b^*&=\max\{\,t\in\mathbb R\mid
(\Delta(0,t)+(\xi_{p+1},\eta_{q+1}))\cap E_{\nu(r,m)}
=\emptyset\,\}.
\end{split}
\end{align}
(Thus, $a^*\le0\le b^*$, by the assumptions on $E_{\nu(q,k)}$
and $E_{\nu(r,m)}$ made in the previous paragraph.)

To simplify notation, let
\begin{equation}
\label{g_nu(r,m)-g_nu(q,k)}
g(x,y)=g_{\nu(r,m)}(x,y)-g_{\nu(q,k)}(x,y).
\end{equation}
Pick any $e\in\mathbb N$ greater than every $x$- and
$y$-exponent ($\in\mathbb R$) occurring in (the unique
representation as in \eqref{signomial} of) $g(x,y)$.
There is a $T\ge a^*$ such that for all $(x,y)
\in\mathbb R_{++}^2$ with $x+y-\xi_{p+1}-\eta_{q+1}\ge T$,\footnote
{In particular, for all
$(x,y)\in\Delta(T,\infty)+(\xi_{p+1},\eta_{q+1})$.}
\begin{equation}
\label{(x+y-xi_{p+1}-eta_{q+1}-a^*)^e ge g(x,y)}
(x+y-\xi_{p+1}-\eta_{q+1}-a^*)^e\ge g(x,y).\footnote
{If we had allowed $e$ to be an arbitrary real number
(as opposed to an element of $e\in\mathbb N$),
then $(x+y-\xi_{p+1}-\eta_{q+1}-a^*)^e$ would not necessarily
be a signomial function (see \cite[Example~4.7]{Delzell 2008}).
Since, in fact, $e\in\mathbb N$, $(x+y-\xi_{p+1}-\eta_{q+1}-a^*)^e$
is a signomial function (it is even an ordinary polynomial).
We shall need this below.}
\end{equation}
We may assume that $T>b^*$ (in particular, $T>0$).

{\it Case 1\/}: $b^*-a^*>0$.
In this case, there is a $C\in\mathbb R$ such that
for all $(x,y)\in\Delta(b^*,T)+(\xi_{p+1},\eta_{q+1})$,
\begin{equation}
\label{b^*,T}
C\cdot(x+y-\xi_{p+1}-\eta_{q+1}-a^*)^e\ge g(x,y).\footnote
{Specifically, we may take
$C=(\max g(x,y))/\min((x+y-\xi_{p+1}-\eta_{q+1}-a^*)^e)$,
where the max and min are taken as $(x,y)$ ranges over the compact
set $\overline{\Delta(b^*,T)+(\xi_{p+1},\eta_{q+1})}$.
(Here we need $\min(x+y-\xi_{p+1}-\eta_{q+1}-a^*)>0$,
which follows from our assumption (here in case~1) that $b^*-a^*>0$.)}
\end{equation}
We may assume that $C\ge1$. Then we may take
\begin{equation*}
u_{(q,k),(r,m)}=
g_{q,k}(x,y)+C\cdot((x+y-\xi_{p+1}-\eta_{q+1}-a^*)^+)^e,
\end{equation*}
which satisfies \eqref{u} (using \eqref{a^*,b^*},
\eqref{(x+y-xi_{p+1}-eta_{q+1}-a^*)^e ge g(x,y)},
\eqref{b^*,T}, and \eqref{g_nu(r,m)-g_nu(q,k)}),
and which is a supremum of infima of finitely many
generalized polynomial functions (using Proposition~\ref
{closed}).

{\it Case 2\/}: $b^*-a^*=0$ (whence $a^*=0=b^*$).
In this case, let
\begin{equation*}
f(x,y)=g(x+\xi_{p+1},\,y+\eta_{q+1}).\footnote
{In general, $f$ is not a signomial function
(again, see \cite[Example~4.7]{Delzell 2008}),
but it is, at least, real analytic (for $x>-\xi_{p+1}$
and $y>-\eta_{q+1}$), and this is all we shall need.}
\end{equation*}
Pick any $D\in\mathbb R_{++}$ greater than
$\max\{f_x(0,0),\>f_y(0,0)\}$.
By Lemma~\ref{analytic}, there is an $\epsilon>0$ such that
$D\cdot(x+y)>f(x,y)$ for all $(x,y)\in\Delta(0,\epsilon)$;
equivalently,
\begin{equation}
\label{D}
D\cdot(x+y-\xi_{p+1}-\eta_{q+1})>g(x,y)
\end{equation}
for all $(x,y)\in\Delta(0,\epsilon)+(\xi_{p+1},\eta_{q+1})$.
We may assume that $\epsilon\le T$.

There is a $C\in\mathbb R$ such that
for all $(x,y)\in\Delta(\epsilon,T)+(\xi_{p+1},\eta_{q+1})$,
\begin{equation}
\label{epsilon,T}
C\cdot(x+y-\xi_{p+1}-\eta_{q+1})^e\ge g(x,y).\footnote
{Specifically, we may take
$C=(\max g(x,y))/\min((x+y-\xi_{p+1}-\eta_{q+1})^e)$,
where the max and min are taken as $(x,y)$ ranges over the compact
set $\overline{\Delta(\epsilon,T)+(\xi_{p+1},\eta_{q+1})}$.
(Here we need $\min(x+y-\xi_{p+1}-\eta_{q+1})>0$,
which follows from $\epsilon>0$.)}
\end{equation}
We may assume that $C\ge1$.

Then we may take
\begin{equation*}
u_{(q,k),(r,m)}=g_{q,k}(x,y)+\sup\{
D(x+y-\xi_{p+1}-\eta_{q+1})^+,
C((x+y-\xi_{p+1}-\eta_{q+1})^+)^e\},
\end{equation*}
which satisfies \eqref{u} (using \eqref{D},
\eqref{epsilon,T},
\eqref{(x+y-xi_{p+1}-eta_{q+1}-a^*)^e ge g(x,y)} (with $a^*=0$),
and \eqref{g_nu(r,m)-g_nu(q,k)}),
and which is a supremum of infima of finitely many
generalized polynomial functions (using Proposition~\ref
{closed}).
\qed

\end{document}